\newtheorem{theorem}{Theorem}[section]
\newtheorem{lemma}[theorem]{Lemma}
\newtheorem{cor}[theorem]{Corollary}
\theoremstyle{definition}
\newtheorem{definition}[theorem]{Definition}
\theoremstyle{remark}
\theoremstyle{assumption}
\numberwithin{equation}{section}
\newcommand{\uc}{\mathbb{S}}
\newcommand{\RA}{\rightarrow}
\newcommand{\sha}{\succ\mkern-14mu_s\;}
\begin{document}
	
	\date{\today}
	
	\title[Forcing minimal patterns of triods]{Forcing minimal patterns of triods}
	
	\dedicatory{Dedicated to my teacher Professor A.Blokh}
	
	\author{Sourav Bhattacharya}

	\address[Dr. Sourav Bhattacharya]
	{Department of Mathematics, Visvesvaraya National Institute Of Technology Nagpur, 
 Nagpur, Maharashtra 440010, India}
	\email{souravbhattacharya@mth.vnit.ac.in}
	
	\subjclass[2010]{Primary 37E05, 37E15; Secondary 37E45}
	
	\keywords{Sharkovsky Theorem, rotation numbers, triods, twist mappings }

	\begin{abstract}
		
 \emph{Rotation numbers} for some maps of \emph{triods} was introduced in \cite{BMR}. The goal of this paper is to study \emph{patterns} of \emph{triods} which don't force other \emph{patterns}  with the same \emph{rotation number} which we name \emph{triod twists}. We obtain their complete characterization and show that these \emph{patterns} can be conjugated to \emph{circle rotation} by a \emph{piecewise monotone} map. We also describe the dynamics of \emph{unimodal triod twist patterns} with a given rational \emph{rotation number}. 

	\end{abstract}
	
	\maketitle

	\section{Introduction}\label{intro}

An important problem in the theory of \emph{dynamical systems} is to study the co-occurrence of various types of cycles or periodic orbits of a given continuous map. In this connection, the first result is the celebrated \emph{Sharkovsky Theorem}(\cite{shatr})  which furnishes a complete description of all possible sets of \emph{periods} of \emph{cycles} of a given \emph{continuous interval map}. 

\subsection{Sharkovsky's Theorem}

To understand this theorem it is necessary to introduce a special ordering of the set $ \mathbb{N}$  of natural numbers called the \emph{Sharkovsky ordering}: 
	
	$$3\sha 5\sha 7\sha\dots\sha 2\cdot3\sha 2\cdot5\sha 2\cdot7 \sha \dots $$
	$$
	\sha\dots 2^2\cdot3\sha 2^2\cdot5\sha 2^2\cdot7\sha\dots\sha 8\sha
	4\sha 2\sha 1$$ 
	
	If $m\sha n$, then we say that  $m$ is {\it \textbf{sha}rper} than
	$n$. We denote by $Sh(k)$ the set of all natural numbers $m$ such that
	$k\sha m$, together with $k$, and by $Sh(2^\infty)$ the set
	$\{1,2,4,8,\dots\}$ which includes all powers of $2$. For a continuous map $f$, let us denote the set of \emph{periods} of its cycles by $Per(f)$. 
	
	\begin{theorem}[\cite{shatr}]\label{t:shar}
		If $f:[0,1]\to [0,1]$ is a continuous map, $m\sha n$ and $m\in
		Per(f)$, then $n\in Per(f)$. Therefore, there exists $k \in  \mathbb{N}
		\cup \{2^\infty\}$ such that $Per(f)=Sh(k)$. Conversely, if $k\in
		\mathbb{N} \cup \{2^\infty\}$ then there exists a continuous map
		$f:[0,1]\to [0,1]$ such that $Per(f)=Sh(k)$. 		
	\end{theorem}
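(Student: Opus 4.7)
The plan is to handle the two halves of the theorem separately: the forcing direction (``$m \in Per(f)$ and $m \sha n$ implies $n \in Per(f)$'') and the realization direction.

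For the forcing direction, the standard tool is the \emph{covering lemma}: if $I,J$ are compact subintervals of $[0,1]$ with $f(I)\supseteq J$ (write $I\to J$), then by the intermediate value theorem there is a compact $I'\subseteq I$ with $f(I')=J$. Iterating, any ``loop'' $I_0\to I_1\to\cdots\to I_{n-1}\to I_0$ produces a point $x\in I_0$ with $f^n(x)=x$ and $f^i(x)\in I_i$, and with some care one can arrange that the period of $x$ is exactly $n$ (not a proper divisor). Given a cycle $P$ of period $m$, I would form the \emph{Markov graph} of $P$: the vertices are the $m-1$ closed intervals between consecutive points of $P$, with an edge $I\to J$ whenever $f(I)\supseteq J$. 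The whole proof then reduces to combinatorics of this finite directed graph.

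The heart of the argument is \emph{Stefan's lemma}. If $m=2k+1$ is the smallest odd period $\geq 3$ of $f$, one shows that $P$ must have the distinguished ``Stefan shape'': the points of $P$ are labeled so that $f$ acts as a specific permutation around the fixed point, and the resulting Markov graph contains a loop of length $m$ together with a short cycle, from which loops of every length $\ell\geq m$ (giving periods $\ell$) as well as all even periods and all powers of $2$ can be extracted. Cycles of period $2^s\cdot q$ with $q$ odd $>1$ are then handled by applying this odd-period analysis to $f^{2^s}$ on a suitable invariant interval, and the step from $2^s$ to $2^{s-1}$ (showing powers of $2$ force smaller powers) is handled by a fixed-point-plus-squaring argument on $f^2$. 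Chaining these reductions yields the full Sharkovsky ordering.

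For the converse, I would construct an explicit one-parameter family realizing each tail $Sh(k)$. The classical choice is the truncated tent family $T_s(x)=\min\{s\cdot\min(x,1-x),\,h(s)\}$ with appropriate cutoff $h(s)$, or, alternatively, Sharkovsky's original maps built by gluing pieces to realize a specified Stefan cycle and nothing sharper. Showing $Per(T_s)=Sh(k)$ reduces to (i) exhibiting the required periodic orbits by the loop method of the first part, and (ii) ruling out sharper periods via the kneading/itinerary description of $T_s$. The main obstacle throughout is step two above: the combinatorial verification that a \emph{minimal} odd period forces the Stefan shape, and that the Stefan Markov graph contains precisely the loops needed — everything else in the proof is bookkeeping once that rigidity statement is in hand.
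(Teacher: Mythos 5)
The paper does not prove this statement: Theorem~\ref{t:shar} is quoted as a classical result with a citation to Sharkovsky's papers, so there is no in-paper argument to compare against. Your outline is the standard modern proof found in the cited literature (covering/loop lemma for the Markov graph of a cycle, \v{S}tefan's rigidity lemma for minimal odd periods, reduction of period $2^s q$ via iterates and the doubling step for powers of $2$, and the truncated tent family for the realization half), and as a sketch it is correct; the genuinely hard steps you defer --- the exact-period refinement of the loop lemma and the \v{S}tefan-shape rigidity --- are exactly the ones a full write-up would have to supply.
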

	
	Theorem \ref{t:shar} elucidates a hidden rich combinatorial structure which controls  the disposition of the cycles of a continuous interval map. However, \emph{period} is a rough characteristic of a cycle as there are a
	lot of cycles with the same period. A much finer way of describing cycles
	is by considering its \emph{cyclic permutation} , that is, the \emph{cyclic
		permutation} we get, when we look at how the map acts on the points
	of the cycle ordered from the left to the right. As it turns out classifying \emph{cycles} in this manner is too detailed and doesn't yield a transparent picture( see \cite{Ba}, \cite{alm00}).  This motivated the development of a  middle-of-the-road way of describing \emph{cycles} : \emph{rotation theory}. 
	
	\subsection{Rotation Numbers}
	The concept of \emph{rotation numbers} was  historically first introduced by Poincar\'e
	for \emph{circle homeomorphisms}(See  \cite{poi}).  It was extended to circle maps of
degree one by Newhouse, Palis and Takens \cite{npt83}, and then
studied, e.g., in \cite{bgmy80}, \cite{ito81}, \cite{cgt84},
\cite{mis82}, \cite{mis89}, \cite{almm88} (see \cite{alm00} with an
extensive list of references). In a general setting \emph{rotation numbers} can be defined as follows: 

\begin{definition}[\cite{mz89},
	\cite{zie95}]
Let $X$ be a \emph{compact metric space} with a Borel $\sigma$-algebra, $\phi:X\to\mathbb R$ be a
\emph{bounded measurable function} (often called an \emph{observable}) and 
$f:X\to X$ be a \emph{continuous} map. Then for any $x \in X$ the set
$I_{f,\phi}(x)$ of all \emph{limits} of the sequence
$ \left \{ {\frac1n} \sum^{n-1}_{i=0}\phi(f^i(x)) \right \}$ is called the {\it
	$\phi$-rotation set} of $x$.  If $I_{f,\phi}(x)=\{\rho_\phi(x)\}$ is a singleton, then the
number $\rho_\phi(x)$ is called the {\it $\phi$-rotation number} of
$x$. 
\end{definition}
 It is easy to see that the $\phi$-\emph{rotation set}, $I_{f,\phi}(x)$ is always a closed
interval. The union of all $\phi$-\emph{rotation sets} of all points of $X$ is called
the \emph{$\phi$-rotation set} of the map $f$ and is denoted by
$I_f(\phi)$.

If $x$ is a $f$-periodic point of period $n$ then its {\it $\phi$-rotation number} 
$\rho_\phi(x)$ is well-defined, and a related concept of the
\emph{$\phi$-rotation pair} of $x$ can be introduced: the pair
$({\frac1n}\sum^{n-1}_{i=0}\phi(f^i(x)), n)$ is the
\emph{$\phi$-rotation pair} of $x$. If the \emph{dynamics} of $f$ and the  \emph{observable} $\phi$ are related we can deduce additional informations about $\phi$-\emph{rotation sets}, for instance, in the case of \emph{circle degree one} case (see \cite{mis82}).

Let
$f:S^1\to S^1$ be a continuous map of degree $1$ and $\pi:\mathbb R\to S^1$
be the natural projection which maps an interval $[0,1)$ onto the whole
circle. Fix a lifting $F$ of $f$. Define $\phi_f:S^1\to \mathbb R$ so that
$\phi_f(x)=F(X)-X$ for any point $X\in \pi^{-1}(x)$; then $\phi_f$ is
well-defined, the \emph{classical rotation set} of a point $z \in S^1$ is simply the $\phi_f$ -\emph{rotation set} 
$I_{f,\phi_f}(z)=I_f(z)$ and the \emph{classical rotation number} of $z$ is is simply the $\phi_f$ -\emph{rotation number} 
$\rho_{f,\phi_f}(z)=\rho(z)$ whenever exists. The {\it rotation set} of the map $f$ is $I_f=\cup I_f(x)$; by \cite{npt83},\cite{ito81} $I_f$ is a closed interval (cf
\cite{B1}). The sum $\sum^{n-1}_{i=0}\phi_f(f^i(x))=m$ taken along the
orbit of an $n$-periodic point $x$ is an integer which defines the {\it rotation pair}
$(m,n)\equiv rp(x)$ of $x$.

\emph{Rotation pairs} can be represented in an interesting way using the notations used in  \cite{BMR}. We  represent the \emph{rotation pair} $rp(x) = (mp, mq)$ with $p,q$ coprime and $m$ being some natural number as a pair $ (t, m)$ where $ t = \frac{p}{q}$. We call the later pair a \emph{modified-rotation pair}(\emph{mrp}) and write $mrp(x) = (t, m)$. We then think of the real line with a prong attached at each rational point and the set $\mathbb{N}
\cup \{2^\infty\} $ marked on this prong in the Sharkovsky ordering $ \sha$ with $1$ closest to the real line and $3$ farthest from it. All points of the real line are marked $0$; at irrational points we can think of degenerate prongs with only $0$ on them. The union of all prongs and the real line is denoted by $\mathbb{M}$. Thus, a \emph{modified rotation pair} $(t,m)$ corresponds to the specific element of $ \mathbb{M}$, namely to the number $m$ on the prong attached at $t$ (See Figure \ref{convex_hull}).  However, no rotation pair corresponds to $(t,2^{\infty})$ or to $(t,0)$. Then, for  $(t_1, m_1) $ and  $  (t_2,m_2)$, in $\mathbb{M}$, the convex hull $[(t_1, m_1), $ $  (t_2,m_2)]$  consists of all \emph{modified rotation pairs} $(t,m)$ with $t$ strictly between $t_1$ and $t_2$ or $t=t_i$ and $m \in Sh(m_i)$ for $ i=1,2$.

\begin{figure}[H]
\caption{Representation of \emph{modified rotation pairs} (\emph{mrp}) using real line and \emph{prongs} attached to it}
\centering
\includegraphics[width=0.5\textwidth]{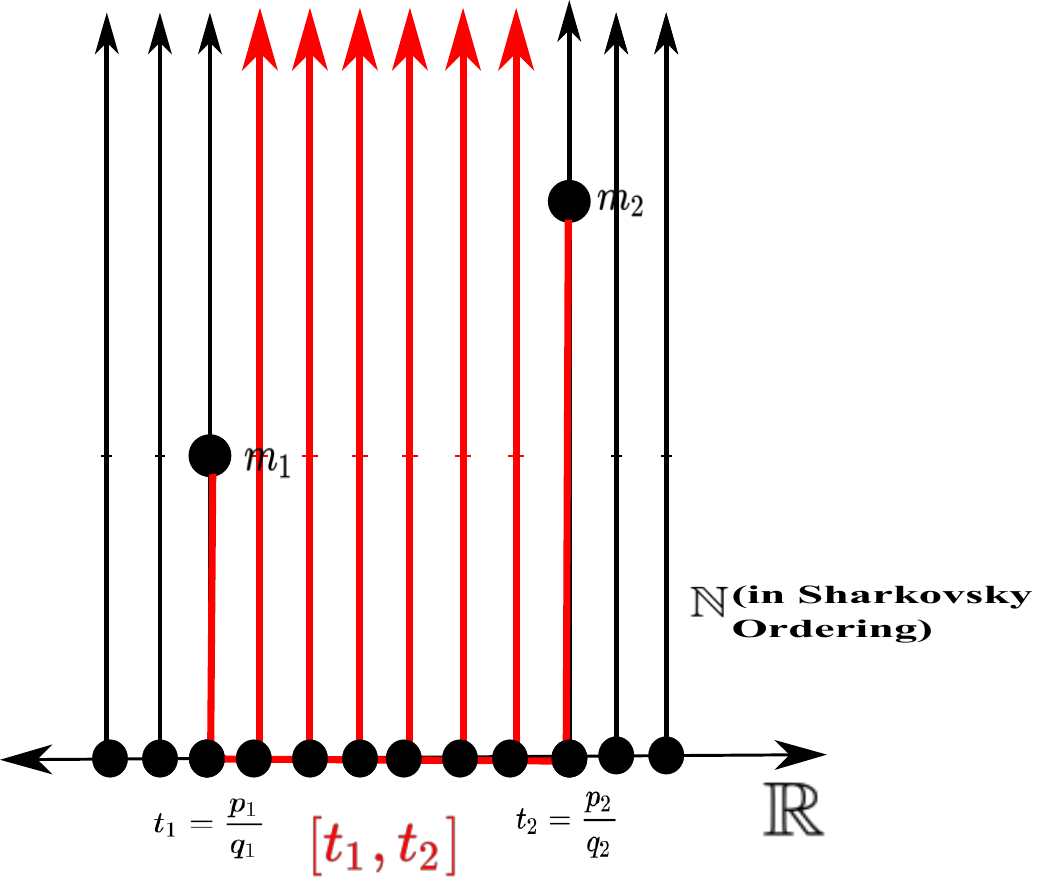}
\label{convex_hull}
\end{figure}

Let $mrp(f)$ be the set of \emph{modified rotation pairs} of all cycles of $f$. See Figure \ref{convex_hull} for a intuitive visualization of $[(t_1,m_1), (t_1,m_2)]$. Clearly, $mrp(f) \subset \mathbb{M}$. The following theorem holds:

\begin{theorem}[\cite{mis82}]\label{circle:maps}
Let $f$ be a degree $1$ map of the circle. Then there are elements $(t_1,m_1)$ and $(t_2,m_2)$ of $\mathbb{M}$ such that $\mathrm{mrp}(f) = [(t_1,m_1), (t_2,m_2)]$ and if $t_i$ is rational, then $m_i \neq 0$ for $i=1,2$. Moreover, for any set of the above form there exists a degree $1$ map $f$ of the circle with $\mathrm{mrp}(f)$ equal to this set.
\end{theorem}

Similar results were also proven for maps of intervals in \cite{BM1} when a version of \emph{rotation numbers} for \emph{interval maps} called  \emph{over-rotation numbers} was introduced using a special observable $\chi$. 

 Let $f:[0,1]\to [0,1]$ be a continuous interval map, $Per(f)$ be its set
of periodic points, and $Fix(f)$ be its set of fixed points. It is easy
to see that if $Per(f)=Fix(f)$ then the \emph{omega limit set}, $\omega(y)$ is a \emph{fixed point} for
any $y$. So, we can assume that $Per(f)\neq Fix(f)$. Now, define:

$$ \displaystyle  \chi_f(x) = \chi(x) =\begin{cases} \frac{1}{2} &\text{if $(f(x)-x)(f^2(x)-f(x))\leqslant  0$,}\\ {0}&\text{if
		$(f(x)-x)(f^2(x)-f(x))>0$.}\end{cases}$$

For any non-fixed periodic point $y$ of period $p(y)$, the integer
$l(y)= \sum^{p(y)-1}_{i=0}\chi(f^i(y))$ is at most $\frac{p(y)}{2}$ and is the
same for all points from the orbit of $y$. The quantity $\frac{l(y)}{p(y)}$ is called the \emph{over-rotation number} of $y$ and the pair $orp(y)=(l(y),
p(y))$ is called the {\it over-rotation pair} of $y$.

	 In an \emph{over-rotation pair} $(p,q)$, $p$ and $q$ are integers and $0<\frac{p}{q}
	\leqslant \frac{1}{2} $.  Like before we can transform all \emph{over-rotation pairs} of \emph{cycles} of a given map $f$ into \emph{modified over-rotation pairs} of $f$ and denote the set of all \emph{modified over-rotation pairs} of \emph{cycles} of $f$ by $mrp(f)$. Then, again $ mrp(f) \subset \mathbb{M}$ and the following theorem holds:

	\begin{theorem}[\cite{BM1}]\label{modified:over:rot}
		If $ f :[0,1] \to [0,1]$ is a continuous interval map with a non-fixed periodic point, then $ mrp(f) = [(t_1, m_1), (\frac{1}{2}, 3) ]$ for some $(t_1, m_1) \in \mathbb{M}$. Moreover, for every $ (t_1, m_1) \in \mathbb{M}$, there exists a continuous map $f : [0,1] \to [0,1]$ with $ mrp(f) = [(t_1, m_1), (\frac{1}{2}, 3)]$. 
	\end{theorem}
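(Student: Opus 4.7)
My approach is to combine a Sharkovsky-type theorem applied fiberwise over rotation numbers with a ``rotation interval'' argument for connectedness in the $t$-direction. For the structural direction, I would fix a rational $t = p/q \in (0, 1/2]$ and study the fiber $S_t(f) := \{m : (t, m) \in mrp(f)\}$. An element $m \in S_t(f)$ corresponds to a cycle of period $mq$ with over-rotation number $t$; Sharkovsky forcing restricted to cycles with over-rotation number exactly $t$ should then show that $S_t(f)$ is either empty or of the form $Sh(k_t)$ for some $k_t \in \mathbb{N} \cup \{2^\infty\}$. Concretely, if $P$ is a cycle with $mrp(P) = (t, k)$, then the $P$-linear model $f_P$ should have cycles with $mrp$ equal to $(t, k')$ for every $k \sha k'$. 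This is a fiberwise adaptation of the classical Sharkovsky theorem applied to patterns constrained to a given over-rotation number.

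Next I would analyze $R(f) := \{t : S_t(f) \neq \emptyset\}$ and show it is of the form $[t_1, 1/2] \cap \mathbb{Q}$ for some $t_1 \in [0, 1/2]$. The upper endpoint $1/2$ is handled by the existence of a fixed point (apply the intermediate value theorem to $f - \mathrm{id}$) together with the ``flip'' contribution of any non-fixed cycle around that fixed point. To show $R(f)$ is an interval, I would use a horseshoe-type argument: given cycles with rotation numbers $\rho_1 < \rho_2$ in $R(f)$ and a rational $t \in (\rho_1,\rho_2)$, the coexistence of these cycles should force intermediate cycles of rotation number $t$ via admissible symbolic itineraries or via the monotonicity structure of the appropriate $P$-linear model. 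Crucially, for $t$ \emph{strictly} in the interior of $R(f)$ one wants $k_t = 3$, so that $S_t(f) = Sh(3) = \mathbb{N}$; the reason is that interior rotation numbers lie inside a rotation horseshoe whose positive-entropy subshift produces cycles of every Sharkovsky level. Combining the fiberwise Sharkovsky structure, the interval structure of $R(f)$, and the interior saturation yields $mrp(f) = [(t_1, m_1), (1/2, 3)]$ in the convex-hull sense defined on $\mathbb{M}$.

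For the realization direction, given any $(t_1, m_1) \in \mathbb{M}$ I would construct an explicit piecewise-linear model. Take an ``over-twist'' (Sharkovsky-minimal) cycle $P$ with $mrp(P) = (t_1, m_1)$ and let $f := f_P$ be its linearization. By the correspondence between patterns forced by $P$ and patterns realized by cycles of $f_P$ recalled in the introduction, it suffices to show that the forcing closure of the pattern of $P$ is exactly $[(t_1, m_1), (1/2, 3)]$ --- no more, no less. The main obstacle, appearing in both directions, is the interior-saturation claim $k_t = 3$ for $t_1 < t < 1/2$: in the structural direction it requires a delicate combination of fiberwise Sharkovsky forcing with the horseshoe construction, while in the realization direction it demands that the over-twist pattern $P$ be rigid enough that $f_P$ realizes no pairs strictly below $(t_1, m_1)$ yet hits every pair above. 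The bookkeeping required to rule out extraneous pairs is typically the most delicate step in such minimality arguments.
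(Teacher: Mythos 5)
The paper does not actually prove this statement: Theorem \ref{modified:over:rot} is quoted verbatim from \cite{BM1} as background, so there is no in-paper argument to compare yours against. What I can do is measure your plan against the known architecture of the proof in \cite{BM1}, with which it is broadly aligned in outline: establish that the achieved over-rotation numbers form an interval $[r_f,\tfrac12]$, analyze the fiber over each rational $t$, and realize an arbitrary $(t_1,m_1)$ by linearizing a forcing-minimal pattern. So the skeleton is right.

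However, as a proof the proposal has real gaps at exactly the points you flag as ``delicate.'' First, the fiberwise claim that $S_t(f)$ is of the form $Sh(k_t)$ does not follow from ``Sharkovsky forcing restricted to cycles with over-rotation number exactly $t$'' --- the classical theorem orders periods, not periods-within-a-fixed-rotation-number, and a cycle of pair $(mp,mq)$ need not interact with other such cycles in any Sharkovsky-like way a priori. The mechanism in \cite{BM1} is the block-structure machinery: a cycle with modified pair $(t,m)$, $m>1$, forces (or has) a block structure over a twist cycle of over-rotation number $t$, and Sharkovsky's theorem is applied to the induced dynamics on the blocks. Without naming this reduction, the fiberwise step is an assertion, not an argument. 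Second, in the realization direction your recipe ``take an over-twist cycle $P$ with $mrp(P)=(t_1,m_1)$ and linearize'' breaks down in two ways: for $m_1>1$ the pair $(m_1 p, m_1 q)$ is not coprime, so $P$ is not an over-twist but rather a minimal (\v{S}tefan-type) extension over one, and for $m_1=2^\infty$ no single cycle has that modified pair at all --- one needs an infinitely renormalizable map built over the twist, so the ``$P$-linear model'' template does not apply. Third, the claim that the linearized model realizes \emph{no more} than $[(t_1,m_1),(\tfrac12,3)]$ is precisely the forcing-minimality content of \cite{BM1} (that over-twists force only larger over-rotation numbers); your plan defers this to ``bookkeeping,'' but it is the substantive theorem, not an afterthought. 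The interval structure of $R(f)$ and the interior saturation $k_t=3$ are correctly identified and are indeed provable by the intermediate-value/horseshoe arguments you sketch.
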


	Given an interval map $f$, denote by $I_f$ the \emph{closure of the union of over-rotation numbers} of $f$-\emph{periodic points}, and call $I_f$ the \emph{over-rotation interval} of $f$. By Theorem \ref{t:shar}, any map $f$ with \emph{non-fixed periodic
	points} has a \emph{cycle} of \emph{period} 2 with \emph{over-rotation number} $\frac{1}{2}$;
	by Theorem \ref{modified:over:rot}, if $\rho(P)=\frac{p}{q}$ for a cycle $P$ then
	$[\frac{p}{q}, \frac12]\subset I_f$; hence for any interval map $f$ there exists
	a number $r_f, 0\leqslant r_f < \frac12,$ such that $I_f=[r_f, \frac12]$. 	A \emph{cycle} $P$ is called \emph{over-twist} if it does not \emph{force} any other \emph{cycle} with the same \emph{over-rotation number}. They are the analogs of \emph{twist cycles} for \emph{interval maps}.  By \cite{BM1} and by \emph{properties of forcing relation}, for any
	$\frac{p}{q}\in (r_f, \frac{1}{2})$, $f$ has an \emph{over-twist cycle}
	of \emph{over-rotation number} $\frac{p}{q}$.

	\subsection{Triods}
	
 We now discuss our plans for the paper. An $n$-od is be defined as the set of complex numbers $ z \in \mathbb{C}$ such that $z^n \in [0,1]$. Thus, an $n$-od consists of a \emph{central point} with $n$-copies of the unit interval attached at their end-points.  The form of the set of \emph{periods} of \emph{periodic orbits} of a map of a \emph{triod} (an $n$-od with $n=3$) and that of a general $n$-od with the central fixed was obtained in \cite{alm98} and \cite{Ba} respectively.  A nice description of these results  was discerned in \cite{almnew} where it was shown that the set of periods of such a map can be expressed as the unions of `` initial segments" of the linear orderings associated to all rationals in the interval $(0,1)$ with denominator at most $n$ defined in certain subsets of rational numbers; however this result were only observed and not explained. A complete  explanation of this phenomenon was expounded in \cite{BMR} by obtaining a result similar to Theorem \ref{circle:maps} and Theorem \ref{modified:over:rot} for maps of \emph{triods}. In this paper we continue studying  maps of \emph{triods} in the spirit of \cite{BMR} and produce parallel versions of notions and results earlier known  for  \emph{interval maps} for such maps.

 Let us now discuss our plans in greater details. Let us denote a \emph{triod} by $T$ and its \emph{central point} by $a$. We call each component of $T-\{a\}$, a \emph{branch} of $T$ and  $a$,  the \emph{branching point}. A map $f : T \to T$ is called $P$-\emph{linear} for a cycle $P$ if it fixes $a$, is \emph{affine} on every component of $[P] - (P \cup \{ a\})$ and also constant on every component of $T - [P]$ where $[P]$ is the convex hull of $P$.  We consider the set $\mathcal{U}$ of all continuous maps of $T$ into itself for which the \emph{central point} $a$ of $T$ is the \emph{unique} fixed point. We write $ x>y$ if $x$ and $y$ lie on the same branch of $T$ and $x$ is farther away from $a$ than $y$; write $ x \geqslant y$ if $x>y$ or $x=y$.

We call two cycles $P$ and $Q$ on $T$ \emph{equivalent} if there exists a homeomorphism $h: [P] \to [Q]$ \emph{conjugating} $P$ and $Q$ and \emph{fixing} \emph{branches} of $T$. The class of \emph{equivalence} of a cycle $P$ is called the \emph{pattern} of $P$. A cycle $P$ of a map $f \in \mathcal{U}$ is said to exhibit a \emph{pattern} $A$ or is of \emph{pattern} $A$ or is a \emph{representative} of the \emph{pattern} $A$ in $f$ if $P$ belongs to the equivalence class $A$. A \emph{pattern} $A$ \emph{forces} a \emph{pattern} $B$ if and only if any map $f \in \mathcal{U}$ with a cycle of \emph{pattern} $A$ has also a cycle of \emph{pattern} $B$. It follows (see \cite{alm98}, \cite{alm00}) that if a pattern $A$ forces a pattern $B \neq A$, then $B$ doesn't force $A$.  We say that a cycle $P$ \emph{forces} a cycle $Q$ if the pattern \emph{exhibited} by $P$ \emph{forces} the \emph{pattern} \emph{exhibited}  by $Q$.  We call a cycle and its \emph{pattern} \emph{primitive} if each of its points lies on a different branch of $T$. 

\begin{theorem}[\cite{alm98}, \cite{alm00}]\label{forcing}
	Let $f$ be a $P$-linear map where $P$ is a cycle of pattern $A$. Then a pattern $B$ is \emph{forced} by $A$ if and only if $f$ has a cycle $Q$ of pattern $B$. 
\end{theorem}

Let $ f \in \mathcal{U}$ and  $P \subset T- \{a \}$  be finite. By an \emph{oriented graph} corresponding to $P$ we shall mean a graph $G_P$ whose vertices are elements of $P$ and arrows are defined as follows. For a $x,y \in P$, we will say that there is an \emph{arrow} from $x$ to $y$ and write $x \to y$ if there exists $ z \in T$ such that $ x \geqslant z$ and $ f(z) \geqslant y$. We will refer to a \emph{loop}  in the \emph{oriented graph} $G_P$ as a \emph{point} \emph{loop} to distinguish them from \emph{loops} of \emph{intervals} which we will define next. We call a \emph{point} \emph{loop} \emph{elementary} if it passes through every \emph{vertex} at most once.

If $P$ is a cycle of period $n$, then the loop $\gamma : x \to f(x) \to f^2(x) \to f^3(x) \to \dots f^{n-1}(x) \to x$ , $x \in P$ is called the \emph{fundamental point loop associated with} $P$.  The following result suggests that to find out the patterns forced by a given pattern $A$, it is sufficient to look at the \emph{ point loops} in the \emph{oriented graph} $G_P$ where $P$ \emph{exhibits} $A$. 

\begin{theorem}[\cite{BMR}] \label{loops:orbits:connection:1} The following properties holds:
	
	\begin{enumerate}
		\item Let $x_0 \to x_1 \to \dots x_{m-1} \to x_0$ be a loop in the graph $G_P$. Then, there is a point $y \in X - \{a\}$ such that $f^m(y) = y$ and for every $ k=0,1,2, \dots ,m-1$, the points $x_k$ and $f^k(y)$ lie on the same branch of $X$.

		\item Let $f$ be a $P$-linear map for some cycle $P \neq \{a\}$. Suppose that $y \neq a$ is a periodic point of $f$ of period $q$. Then, there exists a loop $x_0 \to x_1 \to \dots x_{q-1} \to x_0$ such that $ x_i \geqslant f^i(y)$ for all $i$. 
		
	\end{enumerate}

\end{theorem}

We now define \emph{loops} of \emph{intervals}. For this we  borrow the standard definitions from \cite{alm00} and \cite{alm98}. In our \emph{model} of  $T$  we consider $T$ as being \emph{embedded} into the plane with the \emph{central point} at the \emph{origin} and \emph{branches} being segments of straight-lines.

For $x,y \in T$ lying in the same \emph{branch}, we call the \emph{convex hull} $[x,y]$ of $x$ and $y$, an \emph{interval} on $T$ connecting $x$ and $y$. An \emph{interval} $I$ on $T$  is said to $f$-\emph{cover} an \emph{interval} $J$ on $T$ if $f(I) \supset J$. Then, we can speak of a \emph{chain} of \emph{intervals}   $I_0 \RA I_1 \RA \dots $ on $T$ if every previous \emph{interval} on $T$  in the chain $f$-\emph{covers} the next one. We also speak of \emph{loops of intervals} on $T$. Call an \emph{interval} on $T$ \emph{admissible}  if one of its end-points is $a$. We
call a  \emph{chain(a loop)} of \emph{admissible intervals} $I_0,I_1,\dots $ on $T$  an
\emph{admissible loop(chain)} on $T$  respectively. Result similar to Theorem \ref{loops:orbits:connection:1} can also be obtained for \emph{loops of interval} on $T$. The \emph{loop of intervals} $ \Gamma : [x,a]\to [f(x), a] \to [f^2(x), a] \to \dots [f^{n-1}(x), a] \to [x,a], x \in P$ is called the \emph{fundamental admissible loop of intervals} associated with $P$.

\begin{theorem}[ \cite{alm98}, \cite{zie95}]\label{theorem:interval:graph}
	
	For a loop of interval	$ I_0 \to I_1 \to $ $    \dots $ $  I_{q-1}  \to I_0 $ of length $q$ on $T$,  there exists a point $x_0 \in I_0$ satisfying $f^i(x_0) \in I_{i} $ for $ i \in \{0, 1, 2, $ $ \dots q-1\}$ and $f^q(x_0) = x_0$. 
	\end{theorem}

Now, we are in a position to state the \emph{rotation theory} for \emph{triods} as introduced in \cite{BMR}. Let $ f \in \mathcal{U}$ , $P \subset T- \{a\}$ be finite and the \emph{oriented graph} $G_P$ given by $P$ is \emph{transitive}(that is there is a \emph{path} from every \emph{vertex} to every \emph{vertex}). If $P$ is a \emph{cycle}, it is easy to see that $G_P$ is always \emph{transitive}. Call each \emph{component} of $[P] - (P \cup \{ a\})$, a $P$-\emph{basic interval} on $T$ .
We denote the set of all \emph{arrows} of the \emph{oriented graph} $G_P$ by $A$.   

Let us name the branches of $T$ in the anticlockwise direction such that $B = \{ b_i | i = 0,1,2 \}$  (addition in the subscript of $b$  is modulo 3) is the collection of all its branches. Then, we define a \emph{displacement function} $\psi : B \times B \to \mathbb{R}$  by $\psi(b_i, b_{j}) = \frac{k}{3}$ where $ j = i + k $ (modulo 3)



The function $\psi $ induces in a natural way an \emph{displacement function} $ d : A \to \mathbb{R}$ on the set of all arrows $A$ of the oriented graph $G_P$ , namely for $u,v \in T$, defined by $ d(u \to v) = \psi(b_i,b_j)$ where $u \in b_i$ and $v \in b_j$. For a \emph{point loop} $\Gamma$ in $G_P$ denote by $ d(\Gamma)$ the sum of the values of the \emph{displacement} $d$ along the loop. In our model of $T$, this number tells us how many times we \emph{revolved} around the origin in the anticlockwise sense. Thus, $ d(\Gamma)$ is an integer. We call  $ rp(\Gamma) = (d(\Gamma), |\Gamma|)$  and $ \rho(\Gamma) = \frac{d(\Gamma)}{ |\Gamma|}$ as the \emph{rotation pair} and \emph{rotation number} of $\Gamma$  respectively where $|\Gamma|$ denotes length of $\Gamma$.  The closure of the set of \emph{rotation numbers} of all \emph{loops} of $G_P$ is called the \emph{rotation set} of $G_P$,  denoted by $L(G_P)$. By \cite{zie95}, $L(G_P)$ is equal to the smallest interval containing the \emph{rotation numbers} of all \emph{elementary loops} of $G_P$.

The  \emph{rotation number} and \emph{rotation pair} of a cycle $P$ is  the \emph{rotation number} and \emph{rotation pair} of its \emph{fundamental point loop}. The \emph{rotation interval} of  $P$ is  defined to be the  \emph{rotation interval} of $G_P$. We  introduce \emph{modified rotation pairs} for a cycle $P$  in a similar fashion as we did earlier for interval maps and circle maps. Similarly, we speak of the set of all \emph{modified rotation pairs} $mrp(A)$ forced by a given \emph{pattern} $A$. By Theorem \ref{forcing}, $mrp(A)$ is simply equal to the set of all \emph{modified rotation pairs} of \emph{cycles} of the $P$-linear map $f \in \mathcal{U}$ where $P$ exhibits $A$. 
\subsection{Regular Patterns}

\begin{definition}
A \emph{pattern} $A$ for a map $f \in \mathcal{U}$ is called \emph{regular} if $A$ doesn't force a \emph{primitive pattern} of \emph{period} $2$; call a \emph{cycle} $P$ \emph{regular} if it \emph{exhibits} a \emph{regular pattern}. 
\end{definition}

It is easy to see that \emph{patterns} forced by a \emph{regular pattern} are \emph{regular}. A map $f \in \mathcal{U}$ will be called \emph{regular} if all its \emph{cycles} are \emph{regular}. Let $\mathcal{R}$ be the collection of all \emph{regular} maps $ f \in \mathcal{U}$. By Theorem \ref{forcing}, if $P$ is \emph{regular}, then the $P$-\emph{linear} map $f$ is \emph{regular}.

	\begin{theorem}[\cite{BM2}]\label{result:1}

Let $A$ be a regular pattern for a map $f \in \mathcal{U}$. Then there are patterns $B$ and $C$ with modified rotation pairs $(t_1, m_1)$ and $(t_2,m_2)$ respectively such that $mrp(A) = [(t_1, m_1), (t_2, m_2)]$. 
	\end{theorem}

	Theorem \ref{result:1} fully explains the description of the sets of \emph{periods} of \emph{periodic orbits} of maps of \emph{triods} obtained in \cite{alm98} and \cite{almnew}.  Interestingly, Theorem \ref{result:1} is very similar to Theorem \ref{circle:maps} and  \ref{modified:over:rot} established for interval maps and helps us to make conclusions about the dynamics of a \emph{regular map} from reduced information. The next question is: could we explicitly describe the \emph{dynamics} of a \emph{regular pattern}  given its \emph{rotation number} ?  An affirmative answer to this question could be given if we could first describe the simplest  \emph{regular patterns} with a given \emph{rotation number} , that is \emph{regular patterns} which don't  force other \emph{patterns} with the same \emph{rotation number}. We call them \emph{triod twists} keeping analogy with \emph{twist patterns} of \emph{circle maps} and \emph{over twist cycles} of \emph{interval maps}.  
	
		\begin{definition}
		A \emph{regular pattern} $\pi$ is called a \emph{triod twist} if it doesn't force another \emph{pattern} with the same \emph{rotation number}. 
		\end{definition}

	The paper is devoted towards study of \emph{triod twists}. Our main results are :
	
	\begin{enumerate}
		\item We obtain a necessary and sufficient condition for a given \emph{regular pattern} to be \emph{triod twist}. 
		
		\item We investigate the properties of \emph{triod twist} \emph{patterns} and discern a \emph{bifurcation} in the qualitative nature of these \emph{patterns} at \emph{rotation number} $\frac{1}{3}$. We show that a \emph{triod twist pattern} of a given \emph{rotation number} $\rho$ can be conjugated to \emph{circle rotation} by \emph{angle} $\rho$. 
		
		\item  In the end we obtain  a complete description of the dynamics of all possible \emph{unimodal} \emph{triod twist patterns} for a given rational  \emph{rotation number}.
		
	\end{enumerate}

	The paper is divided into three sections: 
	
	\begin{enumerate}[label={(\alph*)}]
		\item Section 1 is \emph{introduction}. 
		
		\item Section 2 recalls {preliminary} facts and ideas needed for our study. 
		
		\item In Section 3  we  obtain a \emph{complete characterization} of a \emph{triod twist pattern} and show that they can be \emph{conjugated} to \emph{circle rotations}. In the end we study \emph{unimodal} \emph{triod twist pattern} with a given rational  \emph{rotation number}.

	\end{enumerate}
\section{Preliminaries}\label{preliminaries}

A map $f \in \mathcal{U} $ is said to  be \emph{monotone} on a set $ U \subset T$ if $ f^{-1} (v)$ is a \emph{connected subset} of $U$ for every $ v \in f(U) $. A set $ U \subset T$ is called a \emph{segment of monotonicity} or a  \emph{lap} of  $f $ if $U$ is the  \emph{maximal}  \emph{open subset} of $T$  on which $f$ is \emph{monotone}(\emph{in terms of set inclusion}). The \emph{number of segments} of \emph{monotonicity} of a \emph{map} $f \in \mathcal{U} $ is called the \emph{modality} of $f$.   The \emph{modality} of a \emph{cycle} $P$ is simply defined as the \emph{modality} of the $P$-\emph{linear} map $f \in \mathcal{U}$. For a \emph{cycle} $P$, call a \emph{map} $ f \in \mathcal{U}$, $P$-\emph{adjusted} if $f$ has no \emph{cycle} other than $P$  with the same \emph{pattern} as $P$.   

\begin{theorem}[\cite{alm98}]\label{P:adjusted}
	For a cycle $P$ of  a map $f \in \mathcal{U}$  , there exists a $P$-adjusted  map $g $ which agrees with  $f$ on $P$, that is, $f |_P = g |_P$. 
\end{theorem}

\begin{figure}[H]
\caption{A \emph{cycle} $P$  with a \emph{green} point $g$, a \emph{black} point $b$ and a \emph{red} point $r$ }
\centering
\includegraphics[width=0.4\textwidth]{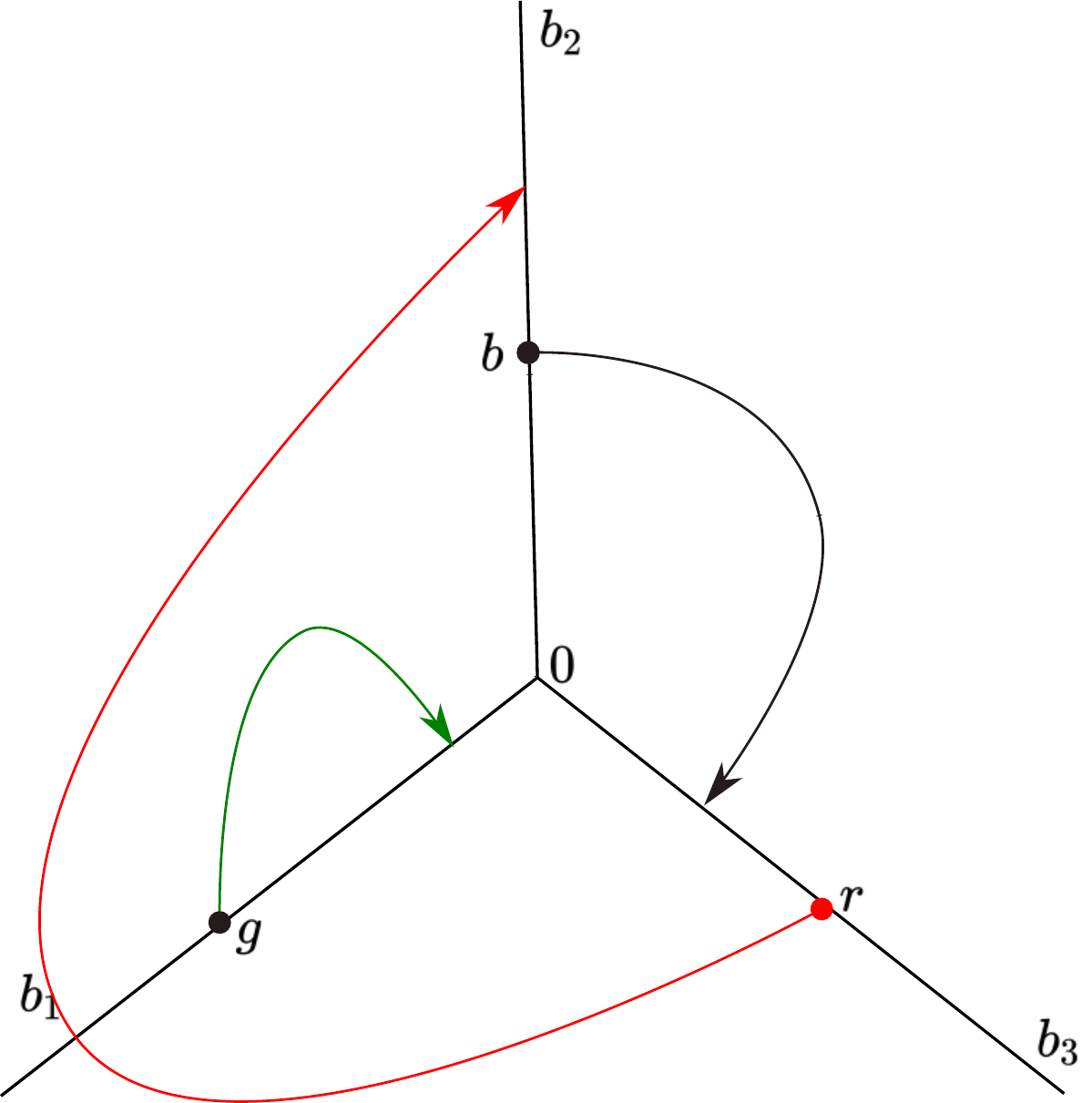}
\label{drawing1}
\end{figure}

While talking about the \emph{arrows} in the \emph{oriented graph} $G_P$ given by a \emph{finite set} $P \subset T - \{ a\}$ we will use the following \emph{color} notations from \cite{BMR}. A arrow $ u \to v$ in $G_P$ where $u,v \in P$ will be called  \emph{green},  \emph{black} or \emph{red} according as $d(u \to v) =  0,  \frac{1}{3}$ or $\frac{2}{3}$ respectively.  If $P$ is a cycle of $f \in \mathcal{U}$, then we define the \emph{color} of a point $x \in P$ to be the \emph{color} of the \emph{arrow} $ x \to f(x)$ in its \emph{fundamental point loop}. A \emph{loop} consisting of \emph{black arrows} will be called a \emph{black loop}.

\begin{theorem}[\cite{BMR}]\label{black:loop:length:3}
	
Let $P$ be a regular cycle. Then for every point $ x \in P$, there exists a black loop of length $3$ passing through it.  
\end{theorem}

\section{Triod twists} \label{phase:function} \label{color:coding}

We say that a \emph{green} point $x$ is mapped \emph{out} if $f(x) > x$ and mapped \emph{in} if  $x > f(x) $

\begin{lemma}\label{df:1:3}\label{no:insider}
	
In a regular cycle $P$ all its green points are mapped in. 
 
\end{lemma}

\begin{proof} 

	Let $f$ be a $P$-linear map. Suppose there exists a \emph{green point} $x \in P$ which is \emph{mapped out}, that is, $f(x) > x$. Since the set of points $\{ z \in P : z > x \}$ is not \emph{invariant} under $f$, there exists a point $ y \in P$, $y >x$ such that  either (i) $ y >  f(y)$ or (ii) $f(y) $ lies in a \emph{branch} different from that of $y$.  In either cases, $f$ has a \emph{fixed point} between $x$ and $y$, a contradiction.

\end{proof}

\begin{lemma}\label{df:1:4} \label{all:branches}
	
A regular cycle $P$ has at least one point in each branch. 
	
\end{lemma}

\begin{proof} 
	
	Suppose $P$ is contained in a single branch $b$ of $T$ and let $x$ and $y$ be the points of $P$ closest and farthest from the branching point $a$. Then, $f(x) \geqslant x$ and $f(y)  \leqslant y$. Thus, $f$ has a fixed point $a' \neq a$ between $x$ and $y$, a contradiction.

If  $P$ is contained in two branches of $T$, we can choose the points $u$ and $v$ of $P$ closest to  $a$ in the two branches. Then, by Lemma \ref{df:1:3},  $f(u) \geqslant v$ and $f(v) \geqslant u$. Thus,  $ u \to v \to u$ is a \emph{loop} in $G_P$ and hence by Lemma \ref{loops:orbits:connection:1}, $P$ forces primitive cycle of period $2$, which is absurd since $P$ is regular.

\end{proof}

\begin{lemma}\label{canonical:numbering}
	
	A regular cycle $P$ always forces a primitive cycle of period $3$.

\end{lemma}

\begin{proof}

 By Lemma \ref{all:branches}, each loop in $G_P$ must pass through each branch at least once. Let $\gamma$ be the loop of shortest length in $G_P$. Suppose $\gamma$ has two arrow $A_1$ and $A_2$ ending in the same branch at $x_1$ and $x_2$ respectively where $x_1 > x_2$. Then, we replace $A_1$ with an arrow $A_3$ beginning where $A_1$ begins and ending at $x_2$. With this replacement we get a shorter loop. Continuing in this manner after finitely many steps, we get a loop $\gamma'$ which doesn't have more than one arrow ending in the same branch. By Lemma \ref{loops:orbits:connection:1}, $\gamma'$ clearly forces a primitive cycle of period 3.

\end{proof}

	Let the pattern associated with the primitive cycle of period $3$ be denoted by $\pi_3$. It is easy to see that $\pi_3$ has rotation number $\frac{1}{3}$ with respect to a numbering $\{ b_i | i = 0,1,2 \}$  (addition in the subscript of $b$  is modulo 3) of the branches of $T$. We now start investigating the properties of a triod twist pattern.

\begin{lemma}\label{coprime:rotation:pair}
	If a triod twist pattern $\pi$ has rotation pair $(p,q)$ then $p$ and $q$ are not coprime. 
\end{lemma}

\begin{proof}
	Let $P$ be a cycle which exhibits $\pi$ and suppose $p$ and $q$ are not coprime. Then, there exists coprime integers $s$ and $t$ such that  $\frac{p}{q} = \frac{s}{t}$ and   $ u = \frac{p}{s}= \frac{q}{t} > 1$.  The \emph{modified rotation pair} of $P$ is $(\frac{s}{t}, u)$. By Theorem \ref{result:1}, $P$ must force a cycle with \emph{modified rotation pair} $(\frac{s}{t}, 1)$ and hence a cycle with \emph{rotation pair} $(s,t)$. Thus, $P$ forces another cycle $Q \neq P$ with the same rotation number, a contradiction. 
	
\end{proof}

 For an ordering  $\{ b_i | i = 0,1,2 \}$  (addition in the subscript of $b$  is modulo 3) of the branches of the triod and for a regular cycle $P$ we denote by $p_i$, the point of  $P$ closest to the branching point $a$ in each branch $b_i$. Then the following result follows :

\begin{lemma}\label{all:black:all:green}
Let $P$ be any regular cycle. Then, the points $p_i, i = 0,1,2 $ are either all black or all	red. 
\end{lemma}

\begin{proof}
Suppose $p_0$ is black. Then, $f(p_0)  \geqslant p_1 $. If $p_1$ is \emph{red}, then $f(p_1) \in b_0$ and hence $f(p_1) > p_0$. Thus, $p_0 \to p_1 \to p_0$ is a loop in $G_P$ and hence by Lemma \ref{loops:orbits:connection:1}, $P$ forces a primitive cycle of period $2$ a contradiction. Thus, $p_1$ is black. Similarly, it follows that $p_2$ is black. If $p_0$ is red , then using similar reasoning it follows that $p_1$ and $p_2$ are also red. 
	
\end{proof}

Clearly, if  $p_i$ are all \emph{red} for all $ i \in \{ 0,1,2\}$, we can re-number the \emph{branches} in such a manner that  all $p_i$ are \emph{black}. Hence, we have :

\begin{cor}\label{canonical:ordering}
	For any regular cycle $P$, there exists a  ordering  $\{ b_i | i = 0,1,2 \}$  (addition in the subscript of $b$  is modulo 3) of the branches of the triod such that $p_i, i = 0,1,2 $ are all black. We call this ordering the canonical ordering. 
\end{cor}

From now on throughout the rest of the paper, we will assume that the branches has been \emph{canonically ordered}.  Lemma \ref{coprime:rotation:pair}  implies the following Lemma :

\begin{lemma}\label{rot:one:third}
 $\pi_3$ is the unique triod twist pattern with rotation number $\frac{1}{3}$. 
\end{lemma}

\begin{proof}
	Let $P$ be a triod twist cycle with rotation number $\frac{1}{3}$ and let $m_1, m_2, m_3$ be the number of its \emph{green}, \emph{black} and \emph{red} points respectively. Since, $P$ has \emph{rotation number} $\frac{1}{3}$, simple computation yields $m_1 = m_3$, that is, $P$ must have \emph{equal number} of \emph{green} and \emph{red} points. By Lemma \ref{coprime:rotation:pair}, the period of $P$ must be $3$. Thus, $m_1 < 2$. Thus, from Lemma \ref{all:black:all:green} it follows that $m_1 = m_3 =0$ and $m_2 = 3$ and hence the result follows.

\end{proof}

\begin{figure}[H]
\caption{The \emph{triod twist pattern} $\pi_3$ with rotation number $\frac{1}{3}$}
\centering
\includegraphics[width=0.4\textwidth]{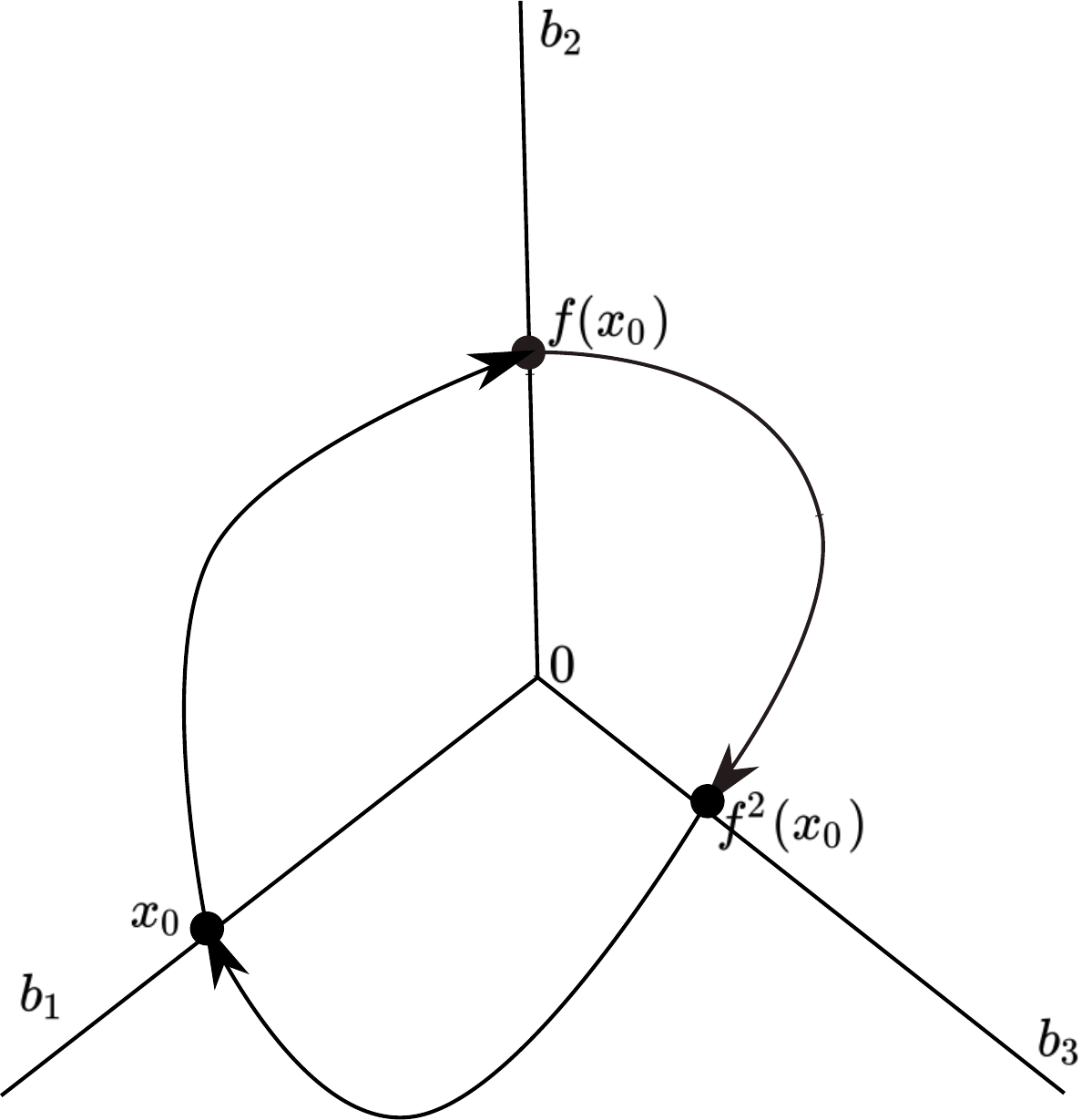}
\label{drawing3}
\end{figure}

We now study \emph{triod twist patterns} of rotation number $\rho \neq \frac{1}{3}$.

\begin{lemma}\label{no:red}
	A triod twist  pattern $\pi$  with rotation number $\rho <  \frac{1}{3}$ has no red points. 
\end{lemma}

\begin{proof}
	Let $P$ be a \emph{cycle} which exhibits $\pi$ and let $f$  be  a $P$-adjusted map. Suppose $P$ has a \emph{red point} $x$. By Theorem \ref{black:loop:length:3} , there exists a \emph{black $f$-point loop} $  x \to x_1 \to x_2 \to x$ of \emph{length} 3 passing through $x$. It follows that $y=f(x)$ and $x_2$  lie in the same branch of $T$ and hence either $y \geqslant x_2$ or $x_2 >y$.  If $ y \geqslant x_2$ then we get a $f$-\emph{point loop} $x \to x_2 \to x$ and hence by Theorem \ref{loops:orbits:connection:1}, $P$ forces a primitive cycle of period $2$, a contradiction. 
	
	Now, suppose $x_2 > y$. Let $\gamma $  be the \emph{fundamental point loop}  associated with the periodic orbit $P$.  Since, $ x_2 > y$, we can replace  $ x \to y$ in  $\gamma$ by  $ x\to x_1 \to y$ to obtain a new \emph{point loop} $\gamma'$. Clearly, $\gamma'$ has \emph{greater length} and hence \emph{lower rotation number} than $\rho$. Hence, by Theorem \ref{loops:orbits:connection:1}, $P$ forces a cycle $R$ with rotation number than $\rho'$ such that $\rho' < \rho < \frac{1}{3}$. Then, Theorem \ref{result:1} and Lemma \ref{canonical:numbering} guarantees the existence of cycle $Q \neq P$ of $f$ with rotation number $\rho$, a contradiction since $P$ is a \emph{triod twist cycle}. 
	
\end{proof}

\begin{lemma}\label{no:green}
	A triod twist  pattern $\pi$  with rotation number $\rho >  \frac{1}{3}$ has no green points. 
\end{lemma}

\begin{proof}
	Let $P$ be a \emph{cycle}  which exhibits $\pi$ and let $f$ be a $P$-\emph{adjusted} map.  Suppose $P$ has a \emph{green point} $x$. By Lemma \ref{no:insider} , $x > f(x)$.  So, we can delete the \emph{arrow} $ x \to f(x)$ from the fundamental point loop $\delta$ associated with $P$ to get a new \emph{point loop} $\delta'$ having greater rotation number than $\rho$. Thus, by Theorem \ref{loops:orbits:connection:1},  $f$ has a cycle  $U$ with \emph{rotation number}  strictly greater than $\rho$.   Hence, by Theorem \ref{result:1} and Lemma \ref{canonical:numbering}, $f$ has a cycle $S \neq P$ with \emph{rotation number} $\rho$ , a contradiction !
\end{proof}

Lemma \ref{no:red}, Lemma \ref{no:green} and Lemma \ref{rot:one:third} can be represented pictorially using a bifurcation diagram shown in Figure \ref{drawing2}.

\begin{figure}[H]
\caption{Bifurcation diagram showing the change in the color of points with the change in rotation number $\rho$}

\includegraphics[width=0.7\textwidth]{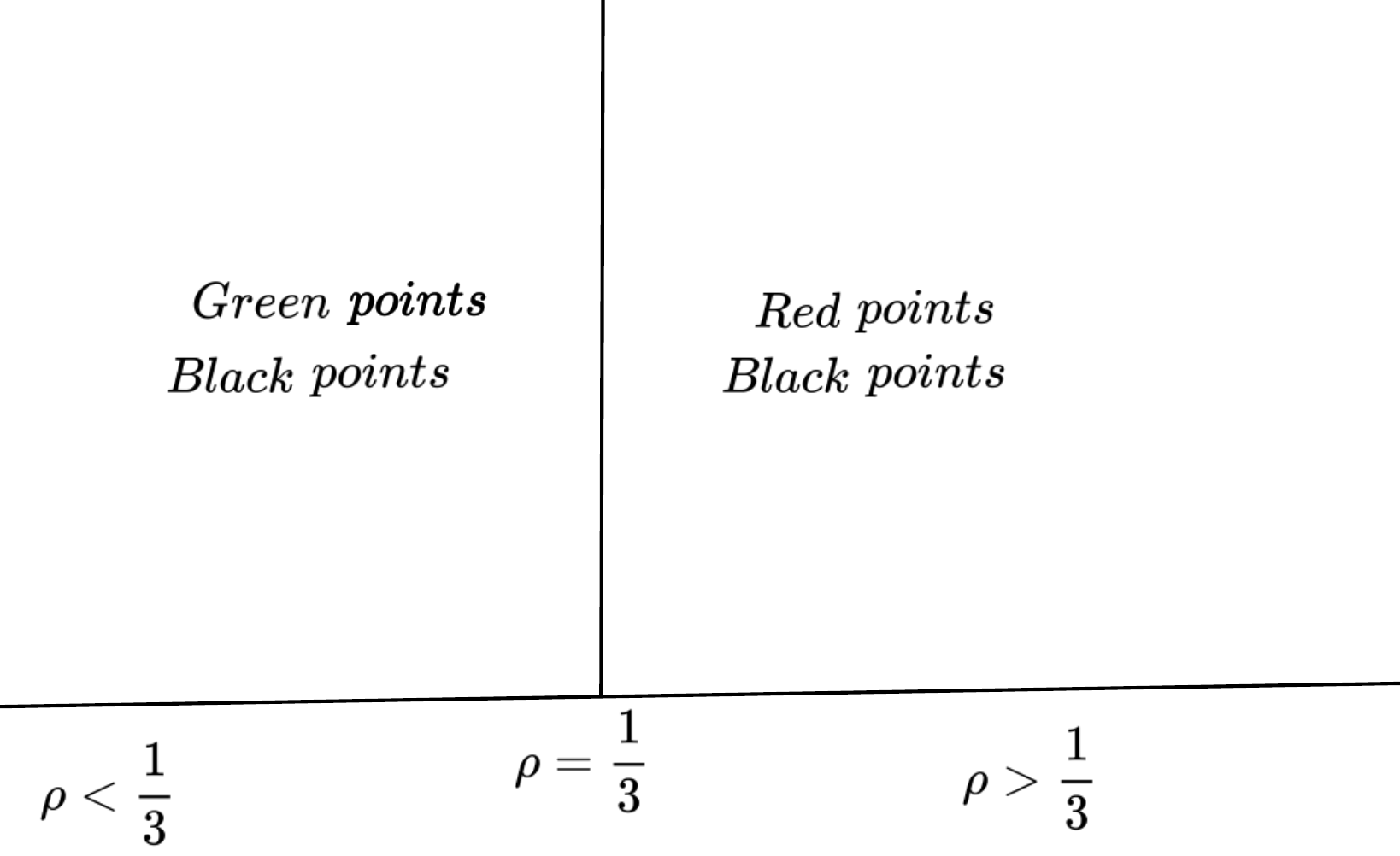}

\label{drawing2}
\end{figure}

We now obtain a complete characterization of \emph{triod twist patterns}. For this we borrow a few notions introduced by A.Blokh and M. Misiurewicz in \cite{BM2} for interval maps. We will use them in a slightly modified way for \emph{triod} maps.

\begin{definition}
	
A \emph{regular} cycle $P$ is called \emph{green} if for any $x,y \in P$ , $ x > y $ such that $f(x)$ and $f(y)$ \emph{lies in the same branch of} $T$, we have $ f(x) >  f(y)$. Call a \emph{pattern} $\pi$ \emph{green} if any \emph{cycle} which exhibits it is \emph{green}.

\end{definition}

\begin{definition}
	The \emph{code function} for a \emph{cycle} $P$  of a map $ f \in \mathcal{R}$ with \emph{rotation number} $\rho \neq \frac{1}{3} $   is a function $ \psi: P \to \mathbb{R}$    defined as follows: 
	
	\begin{enumerate}
		\item choose any point $x_0 \in P$ and set $ \psi(x_0) =0$
		
		\item define $\psi$ iteratively on other points of $P$  as follows :  
		
		\raggedright for each $ k \in \mathbb{N} $ set $ \psi \left ( f^k(x_0) \right ) = \psi(x_0) + k \rho - [t_k]$ where  $ t_k = \displaystyle \sum_{j=0}^{k-1} d \left (f^j(x_0), f^{j+1}(x_0) \right )$ and for $x \in \mathbb{R}$, $[x]$ denotes the greatest integer less than or equal to $x$.

	\end{enumerate}
	
	For any $ x \in P$ , the value $\psi(x)$ is called the \emph{code} of the point $x$. 
\end{definition}

If $P$ is periodic of period $q$,  then $t_q = q \rho \in  \mathbb Z_+$ and hence $ \psi(f^q(x_0)) = \psi(x_0)$. So the definition is consistent.  It is easy to see that  $ \psi(x)- \psi(y)$ is \emph{independent of the choice of the point} $x_0$ for $x,y \in P$.

\begin{definition}

The \emph{code function} $\psi$  of a \emph{cycle} $P$ with \emph{rotation number} $\rho \neq \frac{1}{3}$  is said to be \emph{non-decreasing} if the following holds: 

\begin{enumerate}
	\item If $ \rho < \frac{1}{3}$, then  $\psi(x) \leqslant \psi(y )$ whenever $ x> y$.
	
	\item If $ \rho > \frac{1}{3}$, then $\psi(x) \geqslant \psi(y )$ whenever $ x> y$.
\end{enumerate}

The \emph{phase function} $\psi$  of a \emph{cycle} $P$  is called  \emph{strictly increasing} if $\psi$ is \emph{non-decreasing} and the \emph{phases} of no two \emph{consecutive points} of $P$ lying in the \emph{same branch} are the same.

\end{definition}

\begin{lemma}\label{tri-od:rot:twist:order:inv}
	A triod twist  pattern $\pi$  is green. 
\end{lemma}

\begin{proof}
	Let $\pi$ be a \emph{triod twist  pattern} of period $n$. By Theorem \ref{P:adjusted},  there exists a $P$-\emph{adjusted} map $f$ for a cycle $P$ exhibiting $\pi$.  Suppose that there are points $y,z \in P$ such that $ y >  z$ and $f(z) > f(y)$.  Let $\Gamma $ be the  \emph{fundamental admissible loop of intervals associated with}  $P$. If $f(x) = y$ for $x \in P$,  we can modify $\Gamma$ by replacing $[x, a] \to [y, a] \to [f(y), a ]$  by $[x,a] \to [z,a] \to [f(y), a]$ to get a new \emph{loop of intervals}  $\Delta$. By construction the cycle $Q$ of $f$ \emph{associated} with $\Delta$ has the same \emph{rotation number} as $P$.

 Suppose if possible $P =Q$. Choose a point $ z  $ of $P$ which is  \emph{closest} to  $a$ on its \emph{branch}. Then the interval $[z,a]$  appears in both the \emph{loops} $\Gamma$ and $\Delta$. Choose $ k \in \mathbb{N}$ such that $ y = f^k(z)$. Since, $z$ is the unique point of $P$ in the interval $[z,a]$ and $P$ is associated with $\Delta$,  so starting at $ [z,a]$ and following the \emph{trajectory} of the point $z$ along the loop $\Delta$ for $k$ steps, we should land at $ y \in [z,a]$ which is a contradiction since $ y > z$. Hence, $ P$ is \emph{not associated} with $\Delta$ and hence $ Q \neq P$. 
	
	So, $f$ has a cycle $Q \neq P$ with the  \emph{same rotation number} as $P$. Since, $f$ is $P$-\emph{adjusted}, $P$ and $Q$ have \emph{different  patterns}. So, $\pi$ forces another pattern $\pi'$ with the same \emph{rotation number}, a contradiction !

\end{proof}	

\begin{lemma}\label{necessary:condition:tri-od:rot:twist}
	A triod twist  pattern $\pi$ with rotation number $ \rho \neq \frac{1}{3}$ has a strictly increasing phase function. 
\end{lemma}

\begin{proof}
	
	Consider a cycle $P$  exhibiting  $\pi$ and  let $f$ be a $P$-\emph{adjusted map}. First consider the case $ \rho < \frac{1}{3}$. Suppose that there are points $x,y \in P$ such that $ x>y$ and $ \psi(x) \geqslant \psi(y)$.  Choose $ k \in \mathbb{N}, k < q$ with $ x = f^k(y)$. Consider the \emph{sequence of intervals} $L_i, i \in \{0, 1,2, \dots k\}$ such that $L_0 = [y,a]$, $L_1 = [f(y), a]$, $L_2 = [f^2(y), a] , \dots, L_{k-1} = [ f^{k-1}(y), a], , L_{k} = [ f^{k}(y), a]= [x,a] \supset [y,a]$.  Since, $ x > y$ ,  $L_0 \to L_1 \to L_2 \to \dots L_{k-1} \to L_0$ is an \emph{admissible loop} of intervals. By Theorem \ref{theorem:interval:graph}, $f$ has a point $z \in L_0$ such that $f^i(z) \in L_i, i \in \{ 0,1,2 \dots k-1\}$ and $f^k(z) = z$. The \emph{period} of the orbit $Q$ of  $z$ is either $k$ or a divisor of $k$ and hence the \emph{rotation number} of $Q$ is  $  \frac{\ell}{k}$ for some integer $\ell $. The \emph{phase} of the point $x$ is $\psi(x) = \psi (f^k(y)) = \psi(y) + k \rho -  \sum_{i=0}^{k-1} d(f^{i}(y), f^{i+1}(y)) =   \psi(y) + k \rho -  \ell $. Now,  $ \psi(x) \geqslant \psi(y)$  yields $ \frac{1}{3} > \rho \geqslant  \frac{\ell}{k}$. Thus, by Theorem \ref{result:1},  $f$ must have a  cycle $S \neq P$ of \emph{rotation number} $\rho$ , a contradiction !  The case when $\rho >  \frac{1}{3}$ follows similarly taking into consideration the other side of $\frac{1}{3}$.

\end{proof}

\begin{lemma}\label{sufficient:condition:tri-od:rot:twist}
An regular pattern $\pi$ which is green and has a strictly inc	reasing phase function is a triod twist.

\end{lemma}

\begin{proof}
	
Let $P$ be a \emph{regular cycle} exhibiting $\pi$  with \emph{rotation number} $\rho$ and let $f$ be a $P$-\emph{linear map}. We first consider the case $ \rho < \frac{1}{3}$. Suppose $f$ has a cycle $R \neq P$ of \emph{rotation number} $\rho$.  Define a function $h : R \to P$ as follows. Take $t \in R$. Since, $f$ is $P$-\emph{linear},  $t$ lies in some $P$-\emph{basic interval} $[x,y] $ $(x> y)$. If $f(x)$ and $f(y)$ lie in the same \emph{branch}, define $h(t) = x$. Otherwise since $f$ is $P$-\emph{linear},  $f(t)$ must lie in the same \emph{branch} as $f(x)$ or with $f(y)$. In the first case, define $h(t) = x$, else define, $h(t) =y$. Since $P$ is \emph{green},  $h(f(t)) \geqslant h(f(t))$.  So, for every $ t \in R$, $ \Gamma_t :  [ h(t), $ $a] \to $ $  [h(f(t)) ,  $ $ a] \to  $ $  [h(f^2(t)) , a] \to  $ $ \dots [h(t), a] $ is an \emph{admissible loop of intervals} having  \emph{rotation number} $\rho$.

Let $u_0^t = h(t), u_1^t = h(f(t)), u_2^t = h(f^2(t)), \dots$ and so on. Thus, $\Gamma_t : [u_0^t, a] \to [u_1^t,a] \to \dots [u_0^t, a]$ where $u_i^t \in P$ and $f(u_i^t) > u_{i+1}^t$. If $u_{i+1}^t \neq f(u_i^t)$ for some $i$, then $u_{i+1}^t = f^j(u_i^t)$ for some $j >1$ and so we can replace $[u_i^t, a] \to [u_{i+1}^t, a]$ with  $[u_i^t, a] \to [f(u_i^t), a] \to [f^2(u_i^t), a] \to \dots [f^{j-1}(u_i^t), a] \to [u_{i+1}, a]$. Doing this for every $i$ with $ u_{i+1}^t \neq  u_i^t$ we get a new admissible loop $\Delta_t$ which is either the \emph{fundamental admissible loop of intervals of  $P$ or its repetition} and so has \emph{rotation number}   $\rho$. By construction, the \emph{rotation number} of $\Delta_t$ is the weighted average of the \emph{rotation number} of $\Gamma_t$ and \emph{inserts}(here by \emph{rotation number} of an \emph{insert} $ [f(u_i^t), a] \to [f^2(u_i^t), a] \to \dots [f^{j-1}(u_i^t), a] $ we simply mean the quantity $\displaystyle $ $  \frac{1}{j}  $ $  [ (d(u_i^t, f(u_i^t)) + \dots + $ $ d(f^{j-1}(u_i^t),u_{i+1}^t ) ] $).

Consider an \emph{insert} $[u_i^t, a] $ $  \to [f(u_i^t), $ $   a]  $ $ \to  $ $   \dots [f^{j-1}(u_i^t), $ $  a]    \to [u_{i+1}^t, a]$  and let its \emph{rotation number} is $t$. Simple computation shows that since $u_i^t > u_{i+1}^t$ we have $ \psi(u_i^t) < \psi(u_{i+1}^t)$ and thus, $t < \rho$. Thus, the \emph{rotation number} of every \emph{insert} is \emph{strictly less than} $\rho$. The \emph{rotation number} $\rho$ of $\Delta_t$ is the weighted average of the \emph{rotation number} $\rho$ of $\Gamma_t$ and the \emph{rotation numbers} of \emph{inserts} each of which is \emph{strictly less than} $\rho$, a contradiction.

In the case $ \rho >  \frac{1}{3}$,  the  \emph{rotation number} of each \emph{insert} will be \emph{strictly greater than} $\rho$ and the proof follows mutatis mutandis.

\end{proof}	

Combining Lemma \ref{tri-od:rot:twist:order:inv}, Lemma \ref{necessary:condition:tri-od:rot:twist} and Lemma \ref{sufficient:condition:tri-od:rot:twist} we get : 

\begin{theorem}\label{characterization:tri-od:rot:twist}
The necessary and sufficient condition for a regular pattern $\pi$  to be a  triod twist  is that it is  green and has a strictly increasing code function. 
\end{theorem}

Let $P$ be a \emph{triod twist cycle} of \emph{rotation number} $\frac{p}{q}$ and let $f$ be a $P$-linear map. We define a \emph{ relation} $\sim$ on $P$ as follows. For $x,y \in P$, $ x \sim y$ if $x,y$ lie in the same \emph{branch} of $T$ and  points of $P$ lying in $[x,y]$ have the same \emph{integral part} of the \emph{code function} $\psi$. It follows easily that $\sim$ is an \emph{equivalence relation } on $P$ and the resultant \emph{ equivalence classes}  are called \emph{concordant pieces} of $P$. Then, the following holds :

\begin{theorem}
	
	There exists a map $\Theta : T \to T$ which conjugates $P$ with circle rotation on $\uc$ by angle $\frac{p}{q}$. Further, $\Theta$ is   monotone on the convex hulls of concordant pieces of $P$. 
	
\end{theorem}

\begin{proof}
	
	 We parametrize  $\uc$ by $ [0,1]$ with its endpoints identified and define $ g : [0,1) \to [0,1) $ by $g(x) = x + \frac{p}{q}$ (mod 1).  Let $Q$ be the orbit of $0$ for this map. We define  $\Theta : P \to Q$ as follows. Pick any arbitrary point $x_0 \in P$ and set $ \Theta(x_0) = \psi(x_0) = 0$ and  $ \Theta (f^i(x_0)) =  \psi (f^i(x_0)) $ (mod 1) ,  $i \in  \mathbb{N}$. We have, for each $ x\in P$,  $ \Theta(f(x)) = \psi(f(x)) $(mod 1) $ = \psi(x) + \displaystyle \frac{p}{q} $ (mod 1)  $ = g( \psi(x))$. We extend $\Theta$ from $P$ to $T$ by defining it \emph{linearly} on each $P$-\emph{basic interval} and such that it is \emph{constant} on every \emph{component} of $T - [P]$ where $[P]$ is the \emph{convex hull} of $P$.  The rest follows from Theorem \ref{characterization:tri-od:rot:twist}.

\end{proof}	

We  will now exclusively describe \emph{dynamics} of all possible \emph{unimodal triod twist patterns} with a given \emph{rational rotation number}.  We begin by introducing a few notions. Sets of consecutive points of a certain \emph{color} lying in a \emph{branch} is said to form a \emph{block} of that \emph{color}, thus, we have \emph{green blocks}, \emph{black blocks} and \emph{red blocks}. For $A,B \subset T$, we write $A>B$ if $ a >b$ for every $a \in A$ and $b \in B$.

By Lemma \ref{rot:one:third}, $\pi_3$ is the unique \emph{triod twist pattern} with \emph{rotation number} $\frac{1}{3}$.   Let $P$ be a \emph{periodic orbit} which \emph{exhibits} a \emph{unimodal triod twist pattern} with \emph{rotation number} $\rho = \frac{p}{q} \neq \frac{1}{3}$, $g.c.d(p,q) =1$ and let $f$ be a $P$-\emph{linear map}. Let us assume that the \emph{branches} $b_i , i =0,1,2$ are \emph{canonically ordered} (See Lemma \ref{all:black:all:green} and Corollary \ref{canonical:ordering} ) such that the \emph{color block} $B_i$ closest to the \emph{branching point} $a$ in each \emph{branch} $b_i, i =0,1$ is \emph{black}. Now, since $P$ is \emph{unimodal}, $P$ can have at most one \emph{block} $C$ of a different \emph{color} such that  $ C > B_j$ for some $ j \in \{1,2, 3\}$.  The remaining branches $b_{j+1}$ and $b_{j+2}$ (addition in the subscript $j$ is modulo $3$) will have only one \emph{block} each namely $B_{j+1}$ and $B_{j+2}$.

We first consider the case when $\frac{p}{q} < \frac{1}{3}$. By Lemma \ref{no:red},  $C$ is a \emph{green}. Let $ |C| = g$ and $ |B_j | = b$. By Lemma \ref{tri-od:rot:twist:order:inv}, the points of the set $B_{j}$ map onto the set $B_{j+1}$ in an order-preserving manner, hence, $ |B_{j+1} | = b$. Similarly, $|B_{j+2}| = b$. Since, the \emph{rotation number} of $P$ is $\frac{p}{q}$, simple computation yields $ b = p$ and $ g = q -3p$. Thus, the branches $b_j$, $b_{j+1}$ and $b_{j+2}$ (addition in the subscript $j$ is modulo $3$) contains $ q-2p, p$ and $p$ points respectively. Let us name them as $ x_1, x_2, \dots x_{q-2p}$ ,  $ y_1, y_2, \dots y_{p}$ and $z_1, z_2, \dots z_p$ respectively ( in the direction away from  $a$). 

\begin{enumerate}
	\item By  Lemma \ref{tri-od:rot:twist:order:inv}, the first $p$ points of the \emph{branch} $b_j$ (in the direction away from  $a$) map to the $ p$ points of the \emph{branch} $b_{j+1}$ in an \emph{order-preserving fashion}  and these $p$  points then further map to the $p$ points of the \emph{branch} $b_{j+2}$  in an \emph{order-preserving fashion}. In other words,  $f(x_i) = y_i$ and $f(y_i) = z_i$ for $ i = 1,2 \dots p$.
	
	\item By Lemma \ref{no:insider}, the last point $x_{q-2p}$ in the branch $b_j$ (in the direction away from $a$) must be the image of a point in the \emph{branch} $b_{j+2}$ (addition in the subscript $j$ is modulo $3$). By Lemma \ref{tri-od:rot:twist:order:inv}, it follow that $f(z_p) = x_{q-2p}$. Now, since $P$ is \emph{unimodal} it follow that the images of the points in the \emph{branch} $b_{j+2}$ cannot expand when the map to the \emph{branch} $b_j$. Hence, by Lemma \ref{tri-od:rot:twist:order:inv},  it immediately follows that the $p$ \emph{black} points $z_i$ of the \emph{branch} $b_{j+2}$ map to the last $p$ points of the \emph{branch} $b_j$ (in the direction away from $a$) in an \emph{order-preserving fashion}. In other words, $f(z_{p-i}) = x_{q-2p-i}$ for $i = 0, 1,2 \dots p-1$. 
	
	\item So, the remaining points $ x_1, x_2, \dots x_{q-3p}$ in the \emph{branch} $b_j$ must be images of the \emph{green points} $ x_p, x_{p+1}, \dots x_{q-2p}$ respectively in the same \emph{branch}. In other words, the last $ q-3p$ points in the \emph{branch} $b_j$ (in the direction away from  $a$) have the images \emph{shifted} $p$ points \emph{towards } $a$. 
\end{enumerate}

\begin{figure}[H]
	\caption{The \emph{unimodal triod twist patterns} $\Gamma_i^{ \frac{2}{7}}$ for $ i=1,2,3$ corresponding to \emph{rotation number} $\frac{2}{7} < \frac{1}{3}$}
	\centering
	\includegraphics[width=0.39\textwidth]{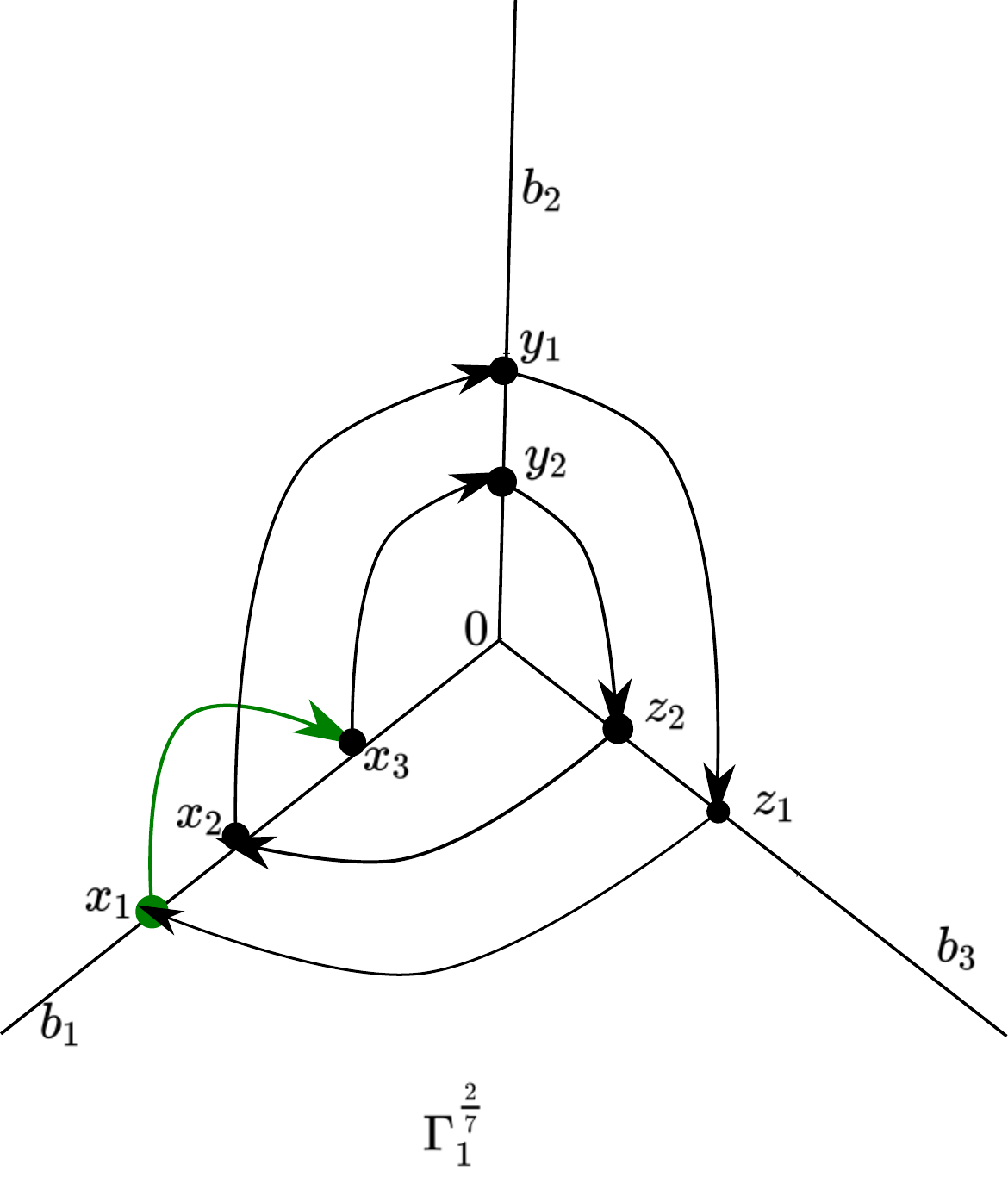}
	\includegraphics[width=0.39\textwidth]{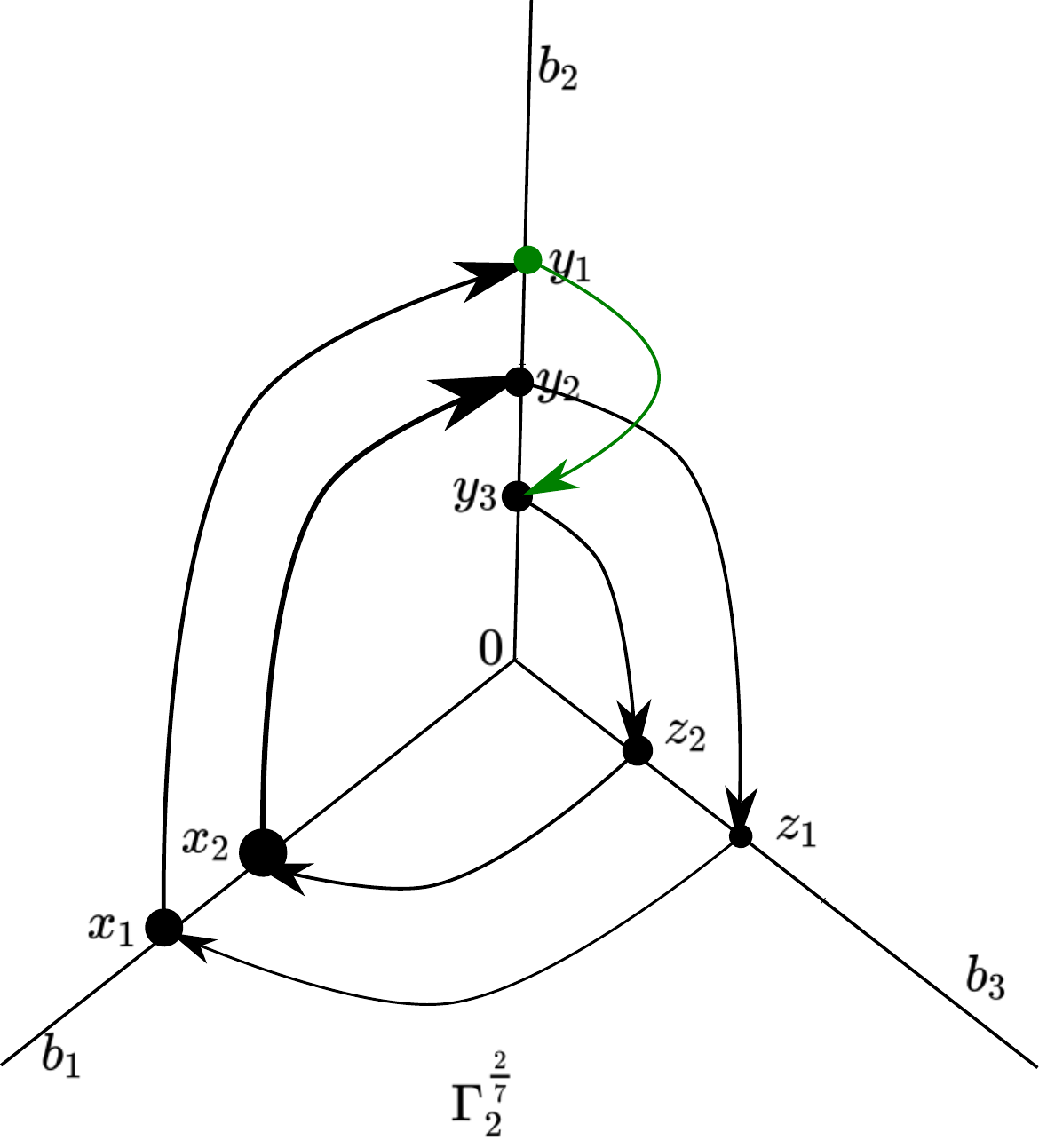}
	\includegraphics[width=0.39\textwidth]{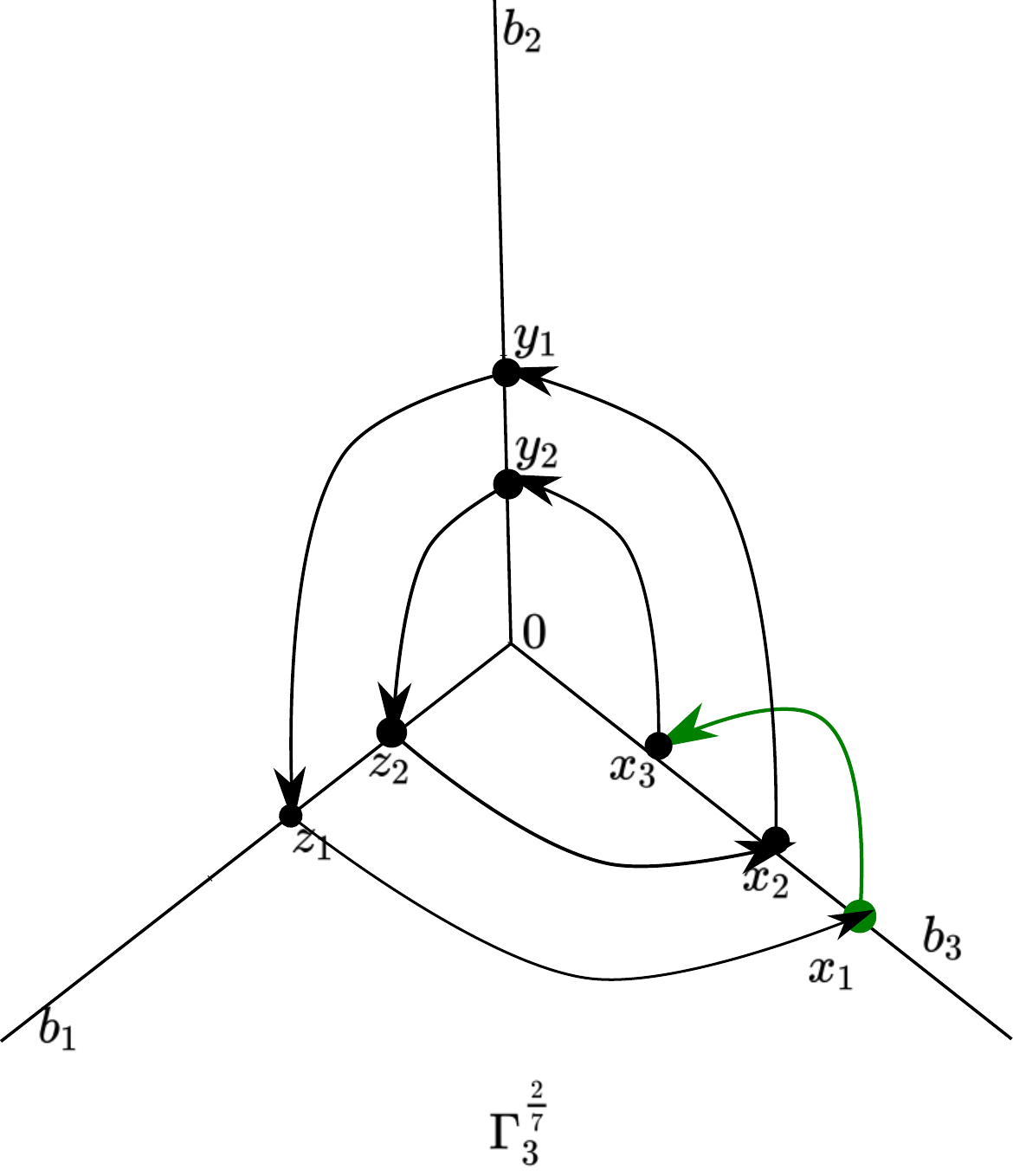}
	\label{drawing5}
\end{figure}

Now, we consider the case when $\frac{p}{q} > \frac{1}{3}$. In this case, by Lemma \ref{no:green},  $C$ is \emph{red}. If $ |C| = r$ and $ |B_j | = b$ then, computation gives us $ b = q-2p$ and $ r = 3p-q$.  Observe that if $ \frac{p}{q} \geqslant \frac{1}{2}$ then $b \leqslant 0$ which is not possible. So, no \emph{unimodal triod twist cycle} exists in this case. So, we set $ \frac{p}{q} < \frac{1}{2}$. We number the points of $P$ in the \emph{branches} $b_j$ , $b_{j+1}  $ and $b_{j+2}$ (in the direction away from  $a$) like before as $ x_1, x_2, \dots x_{p}$ ,  $ y_1, y_2, \dots y_{q-2p}$ and $z_1, z_2, \dots z_p$.

\begin{figure}[H]
	\caption{The \emph{unimodal triod twist patterns} $\Gamma_i^{ \frac{2}{5}}$ for $ i=1,2,3$ corresponding to \emph{rotation number} $\frac{2}{5} > \frac{1}{3}$}
	\centering
	\includegraphics[width=0.39\textwidth]{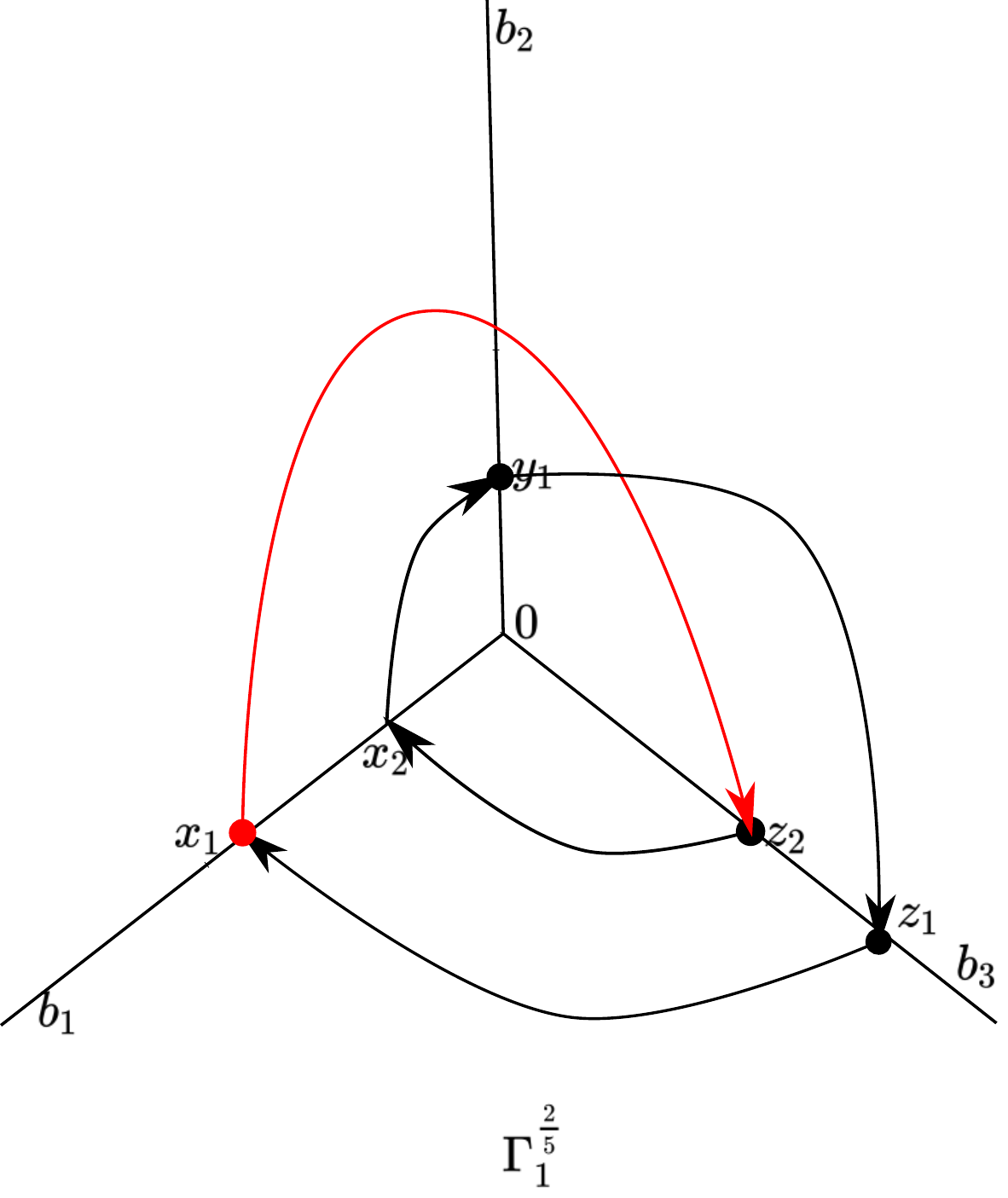}
	\includegraphics[width=0.39\textwidth]{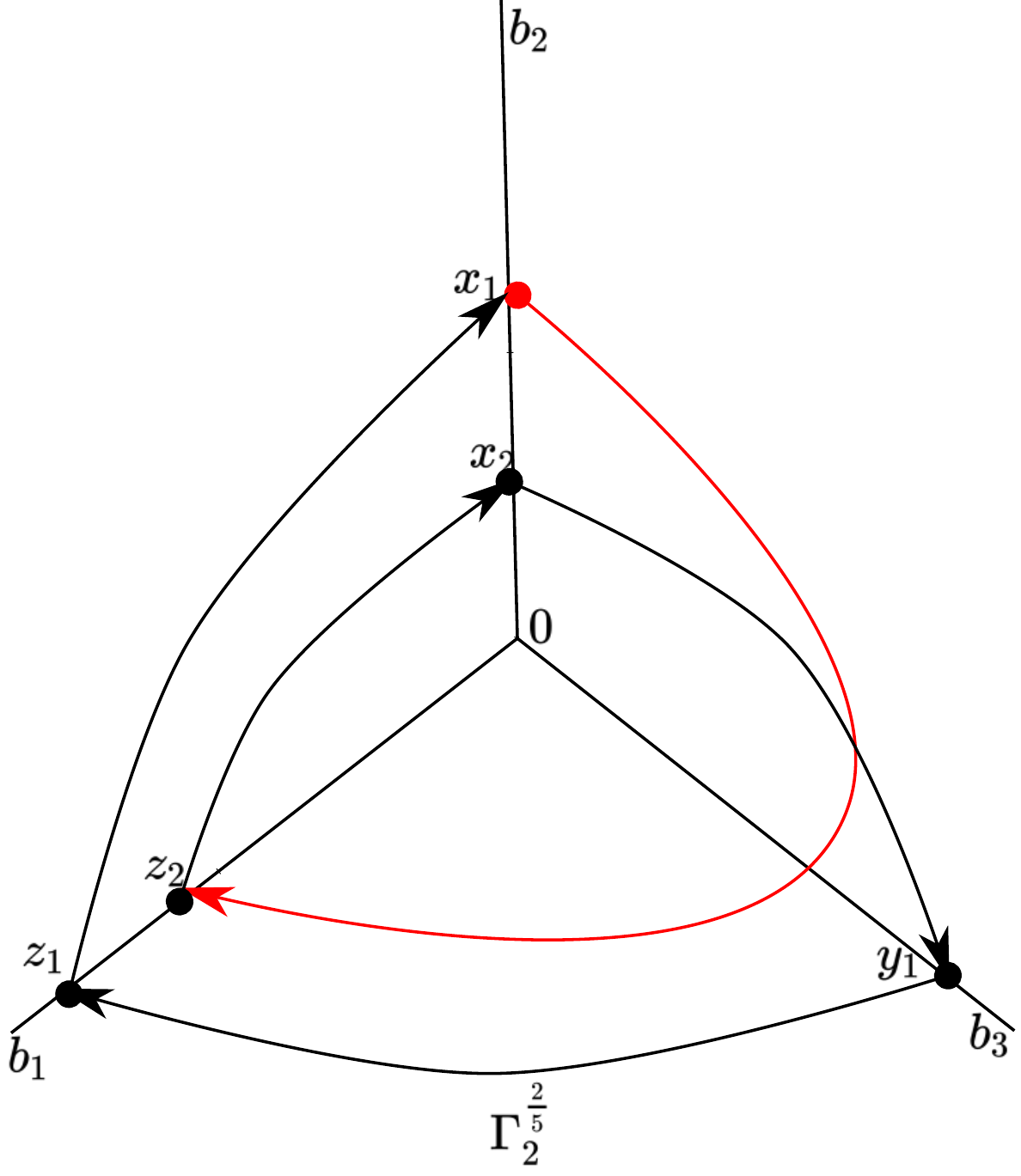}
	\includegraphics[width=0.39\textwidth]{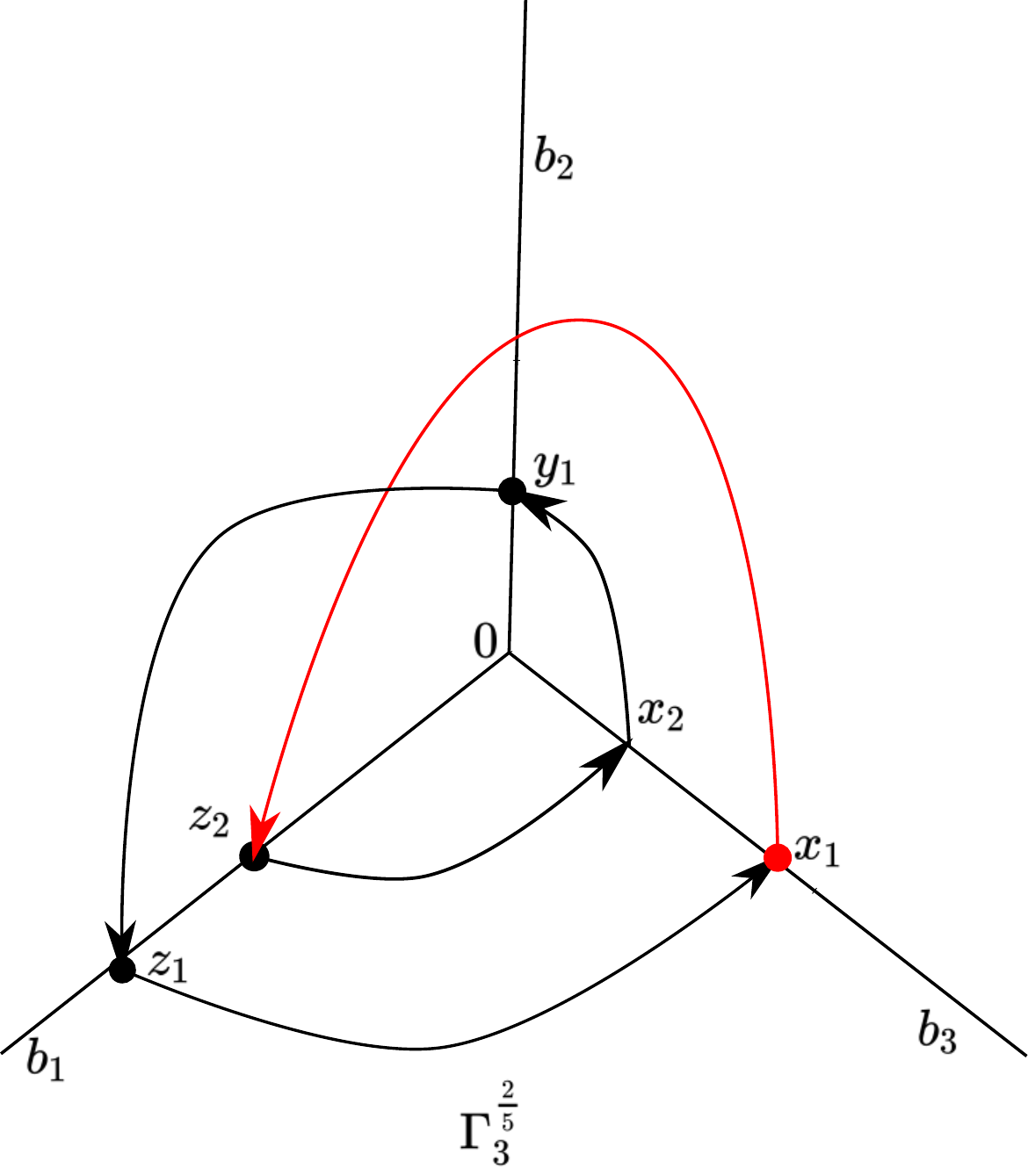}
	\label{drawing6}
\end{figure}

\begin{enumerate}
	\item By Lemma \ref{tri-od:rot:twist:order:inv}, it immediately follows that the first $ q-2p$  of the branch $b_j$ which are \emph{black} (in the direction away from  $a$) map to the $ q-2p$ points of the \emph{branch} $b_{j+1}$ in an \emph{order-preserving fashion}. In other words,  $f(x_i) = y_i$ for $ i = 1,2 \dots q-2p$.  Now, the  $p$ points of the \emph{branch} $b_{j+2}$ which are \emph{black} maps to $b_j$ in an \emph{order preserving fashion} . Since, $P$ is \emph{unimodal} , it follows that  $f(z_i) = x_i$ for $ i = 1,2 \dots p$. 
	
	\item To complete the orbit of $P$, since $P$ is unimodal, the last $ 3p-q$ points of the branch $b_j$ (in the direction away from  $a$)  which are \emph{red} must map to the first $ 3p-q$ points of the branch $b_{j+2}$ (in the direction away from  $a$),  that is, 
	 $f(x_{q-2p+i}) = z_i$ for $ i =1,2, \dots 3p-q$. 
	
	\item Thus, the $q-2p$ \emph{black  points} $y_1, y_2, \dots y_{q-2p}$  in the branch $b_{j+1}$ must map to the last $q-2p$ points  $ z_{3p-q+1},  z_{3p-q+2}, \dots z_p$  in the branch $b_{j+2}$ in an \emph{order preserving} fashion in the direction away from $a$. 
\end{enumerate}

 Observe that depending on the choice of the \emph{branch} $b_j, j=0,1,2$ containing the \emph{block} $C$, we get three distinct \emph{unimodal triod twist pattens} $\Gamma_j^{ \frac{p}{q} }$ , $ j=1,2,3$ with the same \emph{rotation number} $\frac{p}{q}$ (recall that we treat \emph{branches} to be distinguishable). 
 See Figure \ref{drawing5} for the case $\frac{p}{q} < \frac{1}{3} $ and Figure \ref{drawing6} for the case $\frac{p}{q} > \frac{1}{3} $. Lemma \ref{rot:one:third} together with the above leads us to the following Theorem.

\begin{theorem}\label{rho:less:more:third}
	For a given positive rational number $\frac{p}{q} $ strictly less than a half and not equal to a third, there exists three distinct unimodal triod twist patterns of rotation number $\frac{p}{q}$. There exists a unique unimodal triod twist pattern of rotation number one third and no unimodal triod twist pattern exists with rotation number greater than or equal to a half. 
\end{theorem}

\end{document}